\documentclass{amsart}

\usepackage[english]{babel}

\usepackage{a4wide}
\usepackage{amsmath}
\usepackage{graphicx}
\usepackage[colorlinks=true, allcolors=blue]{hyperref}

\def\A{\mathcal{A}}
\def\Q{\mathcal{Q}}
\def\dt{\textrm{d}t}
\def\d{\textrm{d}}
\DeclareMathOperator{\Id}{I}
\newcommand{\norm}[1]{\left\lVert#1\right\rVert}

\newcommand{\R}{\mathbb{R}}

\theoremstyle{plain}
\newtheorem{theorem}{Theorem}[section]

\newtheorem{lemma}{Lemma}[section]
\newtheorem{proposition}{Proposition}[section]
\newtheorem{corollary}{Corollary}[section]

\theoremstyle{definition}
\newtheorem{definition}{Definition}[section]

\theoremstyle{remark}
\newtheorem{remark}{Remark}[section]

\numberwithin{equation}{section}

\title{A complete theory of local minimizers for periodic variational problems}
\author{Marco Fenucci}

\address{Marco Fenucci \newline
Planetary Defence Office / NEO Coordination Centre, ESA ESRIN, Largo Galileo Galilei 1, Frascati, 00044, RM, Italy}
\email{marco.fenucci@ext.esa.int}

\begin{document}

\begin{abstract}
We study the periodic problem in the Calculus of Variations and give a complete characterization of the positive definiteness of the second variation, extending the Reid Roundabout Theorem to periodic boundary conditions. Two formulations are established: one in terms of conjugate and semi-coupled points, and one in terms of the existence of a symmetric solution of the associated Riccati equation satisfying a boundary inequality. We show by an explicit example that the latter condition, while sufficient, is not necessary except in dimension one. As applications, we obtain sufficient conditions for weak and strong local minimizers of periodic solutions of the Euler-Lagrange equation.

\vskip0.1truecm
\noindent
\textbf{Keywords:} Calculus of variations -- periodic boundary conditions -- Roundabout Theorem -- Local minimizers -- Euler-Lagrange equation \\
\textbf{MSC 2020 classification codes:} 49K15 -- 34B05 -- 49N10 
\end{abstract}
\maketitle

\section{Introduction}
The fixed end-point problem (or \emph{problem of Bolza}) consists in minimizing an integral function over all the $C^1$ curves $u:[t_1, t_2] \to \R^n$ such that
\begin{equation}
    \begin{cases}
        u(t_1) = u_1, \\
        u(t_2) = u_2,
    \end{cases}
\end{equation}
where $u_1,u_2 \in \R^n$ are two points in the space. This is the classical problem of Calculus of Variations, and its study dates back to the 17th century. In fact, the first problem of this type to be studied is that of the brachistochrone curve, which solves the following problem: given two points $A$ and $B$ in a vertical plane, and a point mass $m$ moving under a constant gravitational field, which is the curve traced by the point which starts from $A$ and reaches $B$ in the shortest time?
Galileo Galilei already addressed a version of this problem in 1638, noticing that a point mass moving on the arc of a circle is faster than one moving along the corresponding straight line \cite{galileo_1638}, although he incorrectly conjectured the circular arc itself to be the solution.
The problem in its modern form was posed by Johann Bernoulli in 1696, as a challenge to the mathematicians of Europe. Five solutions were submitted by Newton, Leibniz, Jacob Bernoulli, Johann Bernoulli himself, and de l'H\^opital, all showing that the solution is given by an arc of a cycloid. 
A general and systematic treatment followed with Euler and Lagrange derivation of the first-order necessary condition, now known as the Euler-Lagrange equation, and was completed over the following century by Legendre, Jacobi, and Weierstrass, who established necessary and sufficient conditions for local minimality based on the second variation of the functional \cite{pars_1962, dacorogna_2004}. The classical theory reached its modern form in the work of Reid \cite{Reid1980}, who reformulated the Legendre-Jacobi theory in terms of the existence of a symmetric, non-singular solution of an associated matrix Riccati differential equation on the whole interval $[t_1,t_2]$. The resulting equivalence is now known as the Reid Roundabout Theorem.

A natural generalization concerns boundary conditions that couple the values of the curve at both endpoints, rather than fixing them independently -- the \emph{periodic} condition $u(t_1)=u(t_2)$ being one of the simplest example.
The minimization problem with periodic boundary conditions has seen a significant interest in the field of Celestial Mechanics in the last 30 years, since the numerical discovery of the figure-eight solution of the three-body problem \cite{moore_1993}. The figure-eight solution and many other symmetric periodic solutions of the $n$-body problem were later found as minimizers of the Lagrangian action functional, see e.g. \cite{chenciner-montgomery_2000, chenciner-venturelli_2000, ferrario-terracini_2004, barrabes-etal_2006, chen_2008, fusco-gronchi-negrini_2011, chen_2012, fenucci-gronchi_2018, fenucci-gronchi_2023}. At the same time, such solutions were numerically computed by implementing discrete minimization techniques \cite{simo_2000, simo_2001,simo_2002,  fenucci-gronchi_2018, ciccarelli-etal_2026}, while usually using a discretized Hessian to verify local minimality properties \cite{berti-etal_2026}.  

A Roundabout-type theory for quadratic functionals with general linear coupled boundary conditions has been developed in \cite{zeidan-zezza_1989, zeidan_1993, zeidan_2001, dosla-dosly_1997, hilscher-zeidan_2008}, expressed either in terms of coupled-points or in the existence of a solution of the Riccati equation with boundary conditions. 
A specialization of the theory to periodic problems, with applications to Celestial Mechanics problems, was presented in \cite{fenucci-jorba_2020, fenucci_2022}, although the characterization of necessary and sufficient conditions was not complete. 
Here we give a full characterization of necessary and sufficient conditions for local minima with periodic boundary conditions, presenting both the approaches based on coupled-points and the existence of particular solutions of the Riccati equation, completing and strengthening the results of \cite{fenucci_2022}.

\section{Definitions of local minimizers}
Fix $T>0$ and define the action functional
\begin{equation}
    \A(u) = \int_0^T L(t,u,\dot{u}) \dt,
\end{equation}
where $L: [0,T] \times \Omega \to \R$ is a $C^3$ function which is $T$-periodic in $t$, $\Omega \subseteq \R^n \times \R^n$ is an open set, and $u:[0,T]\to \R^n$ is a $C^1$ curve. 
Let us define the set of $T$-period curves as
\begin{equation}
    V = \{ u \in C^1([0,T], \R^n) : u(0) = u(T) \}.    
\end{equation}
Given a subset $X \subseteq V$, then a minimum problem with periodic boundary condition consists in determining the minima of $\A$ over the set $X$. Following the classical definitions of minimum points for the fixed end-point problem, we say that $u_0 \in X$ is a:
\begin{itemize}
    \item[(GM)] global minimum point if $\A(u) \geq \A(u_0)$ for all $u \in X$;
    \item[(SLM)] strong local minimum point if there exists $\varepsilon > 0$ such that, for all $u \in X$ with $\lvert\lvert u-u_0 \rvert\rvert_{\infty} < \varepsilon$, then $\A(u) \geq \A(u_0)$;
    \item[(WLM)] weak local minimum point if there exists $\varepsilon > 0$ such that, for all $u \in X$ with $\lvert\lvert u-u_0 \rvert\rvert_{\infty} + \lvert\lvert \dot{u}-\dot{u}_0 \rvert\rvert_{\infty} < \varepsilon$, then $\A(u) \geq \A(u_0)$;
    \item[(DLM)] directional local minimum point if the function $\varphi(s) = \A(u_0 + sv)$ has a local minimum point at $s=0$ for all $v \in V$.
\end{itemize}
From the classical theory of Calculus of Variations, it is well known that if $u_0$ is a $C^2$ minimizer, then it solves the Euler-Lagrange equation associated to $\A$, which is
\begin{equation}
    \frac{\d}{\dt} L_{\dot{u}}(t, u_0(t), \dot{u}_0(t)) =  L_{u}(t, u_0(t), \dot{u}_0(t)).
    \label{eq:EL}
\end{equation}
Since $u_0 \in X$, it also holds that
\begin{equation}
    L_{\dot{u}}(0, u_0(0), \dot{u}_0(0)) = L_{\dot{u}}(T, u_0(T), \dot{u}_0(T)),
    \label{eq:Ludot}
\end{equation}
which leads to $\dot{u}(0) = \dot{u} (T)$, provided that $L$ is globally convex in $\dot{u}$.

Note that $u_0$ is a (DLM) if and only if the second variation of $\delta^2\A$ is non-negative at $u_0$, where $\delta^2\A$ is defined as 
\begin{equation}
    \delta^2 \A(v) = \frac{\d}{\d s}\A(u_0 + s v)|_{s=0} = \int_0^T\big( v(t)\cdot \hat{L}_{uu}(t)v(t) +2\dot{v}(t) \cdot \hat{L}_{u\dot{u}}(t) v(t) + \dot{v}(t)\cdot \hat{L}_{\dot{u} \dot{u}}(t)\dot{v}(t)\big),
\end{equation}
where
\begin{equation}
\begin{split}  
    \hat{L}_{uu}(t) = L_{uu}(t, u_0(t), \dot{u}_0(t)), \\
    \hat{L}_{u\dot{u}}(t) = L_{u\dot{u}}(t, u_0(t), \dot{u}_0(t)), \\
    \hat{L}_{\dot{u} \dot{u}}(t) = L_{\dot{u} \dot{u}}(t, u_0(t), \dot{u}_0(t)).
\end{split}
\end{equation}
The second variation $\delta^2\A$ is a quadratic functional defined on the whole space $V$ of $T$-periodic curves, therefore we are first interested into giving necessary and sufficient conditions for the positivity of these particular functionals.

\section{Positivity of quadratic functionals}

\subsection{Definitions}
Let us consider a quadratic functional 
\begin{equation}
    \Q(v) = \int_0^T \big( v \cdot Pv + 2\dot{v}\cdot Qv + \dot{v}\cdot R\dot{v}\big)\dt,
    \label{eq:quadfunc}
\end{equation}
defined on the space $V$ of $T$-periodic functions, where $P,Q,R:[0,T] \to \R^{n\times n}$ are $C^1$ matrix functions such that $P(t)$ and $R(t)$ are symmetric for all $t\in[0,T]$. The Euler-Lagrange equation associated to $\Q$ is usually called Jacobi differential equation, and it reads as
\begin{equation}
    \frac{\d}{\dt} \big(R\dot{y} +Qy\big) = Q^T \dot{y}+ Py.
    \label{eq:jde_2ndorder}
\end{equation}
Provided that $R(t)$ is non-singular for all $t \in [0,T]$, then we can write Eq.~\eqref{eq:jde_2ndorder} in linear form as
\begin{equation}
    \begin{cases}
        \dot{y} = Ay + Bz, \\
        \dot{z} = Cy - A^Tz,
    \end{cases}
    \label{eq:jde}
\end{equation}
where
\begin{equation}
    A = -R^{-1}Q, \quad B = R^{-1}, \quad C= P - Q^TR^{-1}Q.
\end{equation}
The matrices $B$ and $C$ are also symmetric for all $t\in[0,T]$. We also introduce the matrix form of Eq.~\eqref{eq:jde}:
\begin{equation}
    \begin{cases}
        \dot{Y} = AY + BZ, \\
        \dot{Z} = CY - A^TZ,
    \end{cases}
    \label{eq:jde_matrix}
\end{equation}
where $Y,Z : [0,T] \to R^{n\times n}$ are matrix functions.

\begin{remark}
    Note that if $(Y_1,Z_1)$ and $(Y_2,Z_2)$ are two solutions of Eq.~\eqref{eq:jde_matrix}, then
    \begin{equation}
        Y_1^T(t) Z_2(t) - Z_1^T(t)Y_2(t)\equiv K, 
        \label{rmk:constComb}
    \end{equation}
    where $K\in \R^{n\times n}$ is a constant matrix.
\end{remark}

\begin{definition}
    A solution $(Y,Z)$ of Eq.~\eqref{eq:jde_matrix} is said to be self-conjoined if
    \begin{equation}
        Y^T(t) Z(t) - Z^T(t)Y(t)\equiv 0.
    \end{equation}
\end{definition}

\begin{proposition}
   Let $(Y_0, Z_0)$ be a self-conjoined solution of Eq.~\eqref{eq:jde_matrix} such that $\det
   Y_0(t) \neq 0$ for all $t \in [0,T]$. Then all solutions $(Y,Z)$ of Eq.~\eqref{eq:jde_matrix} are
   given by the formula
   \begin{equation}
      \begin{aligned}
    Y(t) & = Y_0(t)\big( M+S_0(t)N\big) , \\
    Z(t) & = Z_0(t)\big( M+S_0(t)N\big) + Y_0^{-T}(t)N,
 \end{aligned}
   \label{eq:jde2GenSol}
   \end{equation}
   where $M,N \in \R^{n\times n}$ are arbitrary constant matrices and
   \begin{equation}
      S_0(t) = \int_{a}^{t}Y_0^{-1}(s)B(s)Y_0^{-T}(s)\,\d s,
      \label{eq:S0expression}
   \end{equation}
 for some $a \in
   [0,T]$. 
   Moreover
   \begin{equation}
   \begin{aligned}
      Y^T_0(t)Z(t)-Z_0^T(t)Y(t)& = N,  \\
      Y^T(t)Z(t) - Z^T(t)Y(t)  & = M^TN-N^TM.
   \end{aligned}
      \label{eq:MNexpression}
   \end{equation}
   \label{prop:generalSolJDE}
\end{proposition}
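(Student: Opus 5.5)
The plan is to verify directly that the formula in \eqref{eq:jde2GenSol} produces solutions, then use a dimension count (uniqueness for the linear system \eqref{eq:jde_matrix}) to conclude that it produces all of them, and finally compute the two bilinear expressions in \eqref{eq:MNexpression}.

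\emph{Step 1: the formula gives solutions.} Set $W(t) = M + S_0(t)N$, so that $\dot W = Y_0^{-1}BY_0^{-T}N$. Differentiating $Y = Y_0 W$ gives
\begin{equation*}
\dot Y = (AY_0+BZ_0)W + BY_0^{-T}N = AY + B\big(Z_0W + Y_0^{-T}N\big) = AY + BZ .
\end{equation*}
For the second equation I first use the self-conjoined relation $Y_0^TZ_0 = Z_0^TY_0$, equivalently $Z_0Y_0^{-1} = Y_0^{-T}Z_0^T$. Next I compute
\begin{equation*}
\frac{\d}{\dt}Y_0^{-T} = -Y_0^{-T}\dot Y_0^T Y_0^{-T} = -Y_0^{-T}(Y_0^TA^T + Z_0^TB)Y_0^{-T} = -A^TY_0^{-T} - Y_0^{-T}Z_0^TBY_0^{-T}.
\end{equation*}
Then
\begin{equation*}
\dot Z = (CY_0 - A^TZ_0)W + Z_0Y_0^{-1}BY_0^{-T}N - A^TY_0^{-T}N - Y_0^{-T}Z_0^TBY_0^{-T}N .
\end{equation*}
The two terms containing $B$ cancel precisely because $Z_0Y_0^{-1} = Y_0^{-T}Z_0^T$. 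What remains is $CY - A^TZ$. This cancellation is the only point where self-conjoinedness is needed, and it is the step I expect to require the most care.

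\emph{Step 2: every solution has this form.} Solutions of \eqref{eq:jde_matrix} are uniquely determined by $(Y(a),Z(a))$, since the coefficients are continuous. At $t=a$ we have $S_0(a)=0$, so the formula gives
\begin{equation*}
Y(a) = Y_0(a)M, \qquad Z(a) = Z_0(a)M + Y_0^{-T}(a)N .
\end{equation*}
Because $Y_0(a)$ is invertible, any prescribed initial data can be matched by choosing
\begin{equation*}
M = Y_0^{-1}(a)Y(a), \qquad N = Y_0^T(a)\big(Z(a) - Z_0(a)M\big).
\end{equation*}
Uniqueness then gives the identity on all of $[0,T]$.

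\emph{Step 3: the bilinear identities.} For the first identity,
\begin{equation*}
Y_0^TZ - Z_0^TY = Y_0^TZ_0W + N - Z_0^TY_0W = N,
\end{equation*}
again by self-conjoinedness. For the second, write $Z^TY = W^TZ_0^TY_0W + N^TW$ and $Y^TZ = W^TY_0^TZ_0W + W^TN$. Their difference is
\begin{equation*}
Y^TZ - Z^TY = W^TN - N^TW = M^TN - N^TM + \big(N^TS_0^TN - N^TS_0N\big).
\end{equation*}
The bracket vanishes because $S_0$ is symmetric, being the integral of $Y_0^{-1}BY_0^{-T}$ with $B$ symmetric. This yields \eqref{eq:MNexpression}.
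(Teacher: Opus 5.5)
Your proof is correct and follows the paper's route: you verify the formula by differentiation using self-conjoinedness, then match arbitrary initial data at $t=a$ and conclude by uniqueness. The only minor difference is in Step 3, where you check the two bilinear identities for all $t$ using self-conjoinedness and the symmetry of $S_0$, while the paper evaluates them at $t=a$ and relies implicitly on their constancy from the earlier remark; both are valid.
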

\begin{proof}
   Differentiating \eqref{eq:jde2GenSol} and using the fact that $(Y_0, Z_0)$ is a self-conjoined
   solution, we can verify that $(Y,Z)$ defined by \eqref{eq:jde2GenSol} solves
   Eq.~\eqref{eq:jde_matrix}. 
   Indeed, 
   \[
      \begin{split}
         \dot{Y} & = (AY_0 + BZ_0)(M+S_0N)+BY_0^{-T}N \\
         AY + BZ & = AY_0(M+S_0N)+BZ_0(M+S_0N)+BY_0^{-T}N \\
         \dot{Z} & = (CY_0-A^TZ_0)(M+S_0N) +
         Z_0Y_0^{-1}BY_0^{-T}N-Y_0^{-T}(AY_0+BZ_0)Y_0^{-T}N \\
         CY-A^TZ & = CY_0(M+S_0N) - A^T Z_0(M+S_0N)-A^TY_0^{-T}N,
      \end{split}
   \]
   where we have used that 
   \[
      \frac{d}{dt}(Y^{-1})= -Y^{-1}\dot{Y}Y^{-1}. 
   \]
   Moreover, since 
   \[
      Y(a) = Y_0(a)M, \quad Z(a) = Z_0(a)M + Y_0^{-T}(a)N,
   \]
   the matrices $M,N$ can be chosen in one and only one way so that they satisfy arbitrary
   initial conditions. Then \eqref{eq:MNexpression} are easily verified evaluating at the left
   hand sides for $t=a$.
\end{proof}

\begin{definition}
    Let $(y,z)$ be a solution of Eq.~\eqref{eq:jde} with $y(0)=0$. A point $c \in (0,T]$ is said to be conjugate with zero if $y(c)=0$.
\end{definition}

\begin{definition}
    Let $(y,z)$ be a solution of Eq.~\eqref{eq:jde}. A point $c \in (0,T]$ is said to be semi-coupled with $0$ if
    \begin{equation}
        \begin{cases}
            y(0) = y(c), \\
    \displaystyle        z(0) = z(c) + \int_c^T P(t)\dt \ y(c).
        \end{cases}
    \end{equation}
    Moreover, if $y(t) \neq y(c)$ for all $t \in (t,T]$, $c$ is said to be coupled with $0$.
\end{definition}

\noindent We introduce the following useful for later discussions.
\begin{itemize}
    \item[(L)] The Legendre condition holds if $R(t)\geq 0$ for all $t\in[0,T]$.
    \item[(L')] The strengthened Legendre condition holds if $R(t) > 0$ for all $t\in[0,T]$.
    \item[(R)] The regularity condition holds if $v \cdot \int_0^T P(t)\dt \ v \geq 0$ for all $v \in \R^n$.
    \item[(R')] The strengthened regularity condition holds if $v \cdot \int_0^T P(t)\dt v > 0$ for all $v \in \R^n \setminus\{0\}$.
    \item[(J)] The Jacobi condition (J) holds if every non-zero solution $(y,z)$ of Eq.~\eqref{eq:jde} with $y(0)=0$ does not have any conjugate point in $(0,T)$.
    \item[(J')] The strengthened Jacobi condition (J) holds if every non-zero solution $(y,z)$ of Eq.~\eqref{eq:jde} with $y(0)=0$ does not have any conjugate point in $(0,T]$.
    \item[(C)] Condition (C) holds if every non-zero solution $(y,z)$ of Eq.~\eqref{eq:jde} does not have any semi-coupled point in $(0,T)$.  
    \item[(C')] Condition (C') holds if every non-zero solution $(y,z)$ of Eq.~\eqref{eq:jde} does not have any semi-coupled point in $(0,T]$. 
\end{itemize}

We associate to the quadratic functional of Eq.~\eqref{eq:quadfunc} the Riccati matrix equation, which is defined as
\begin{equation}
   \dot{W}-C+WA+A^TW+WBW = 0.
   \label{eq:rde}
\end{equation}
For this purpose we note, that if $(Y,Z)$ is a solution of Eq.~\eqref{eq:jde_matrix} such that $Y(t)$ is non-singular on $[0,T]$, then $W(t) =
Z(t)Y^{-1}(t)$ is a solution of Eq.~\eqref{eq:rde} defined on the whole
interval $[0,T]$. In fact
\begin{align*}
   \frac{\d Z}{\dt} Y^{-1} + Z \frac{\d Y^{-1}}{\dt}
   -C +ZY^{-1}A+A^TZY^{-1}+ZY^{-1}BZY^{-1}     & =\\
    (CY-A^TZ)Y^{-1}-ZY^{-1}\dot{Y}Y^{-1}-C +ZY^{-1}A+A^TZY^{-1}+ZY^{-1}BZY^{-1}   &=  \\
   -ZY^{-1}\dot{Y}Y^{-1} +ZY^{-1}(A+BZY^{-1})  &= \\
   -ZY^{-1}\dot{Y}Y^{-1} +ZY^{-1}\dot{Y}Y^{-1} &= 0  \\
\end{align*}
Moreover, if $(Y,Z)$ is also self-conjoined, then $W$ is symmetric.

\subsection{Approach with coupled points}

First we focus on necessary conditions to have a non-negative quadratic functional. In the classical setting of the fixed endpoints problem, it is known that (L) is a necessary condition and moreover, if (L') holds, then (J) is also necessary. The interested reader can refer to classical books of Calculus of Variations, such as \cite{giaquinta_2004}, for detailed proofs of these facts. 
Here we can prove similar statements, as done in the following lemmas. 

\begin{lemma}
   If $\Q(v)\geq 0$ for all $v \in V$, then conditions (L) and (R) hold. 
   \label{lemma:necessary1}
\end{lemma}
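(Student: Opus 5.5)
We handle the two conditions separately, each time testing $\Q$ on a suitably chosen periodic function.

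For (R), we take constant test functions. Fix $w \in \R^n$ and put $v(t)\equiv w$. Then $v\in V$, because $v(0)=v(T)$, and $\dot v\equiv 0$. Substituting into \eqref{eq:quadfunc} leaves only the first term:
\[
0\le \Q(v)=\int_0^T w\cdot P(t)w\,\dt = w\cdot \int_0^T P(t)\dt\ w .
\]
Since $w$ is arbitrary, this is exactly (R).

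For (L), we adapt the classical Legendre argument using highly oscillating functions with small support. Suppose (L) fails. Then there are $t_0\in[0,T]$ and a unit vector $\xi$ with $\xi\cdot R(t_0)\xi=-2\delta<0$. Because $R$ is continuous, we can choose an interval $I=[t_0-h_0,t_0+h_0]\cap[0,T]$ on which $\xi\cdot R(t)\xi\le-\delta$. If $t_0$ is an endpoint, we use a one-sided interval; the test functions below vanish at both $0$ and $T$, so they lie in $V$ regardless.

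We pick a nonzero scalar $\eta\in C^1_c$ supported in $I$, for instance a $C^1$ bump supported in the interior of $I$, and set
\[
v_k(t)=\frac{1}{k}\,\eta(t)\sin(k^2 t)\,\xi .
\]
Each $v_k$ is $C^1$ and vanishes near $0$ and $T$, so $v_k\in V$. We have $\|v_k\|_\infty=O(1/k)$, while $\dot v_k = k\,\eta(t)\cos(k^2t)\xi + O(1/k)$. Estimating the three terms of $\Q(v_k)$ gives:

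\begin{itemize}
\item the $P$-term is $O(1/k^2)$;
\item the cross term $2\dot v_k\cdot Qv_k$ is $O(1)$, as the product of an $O(k)$ and an $O(1/k)$ factor;
\item the $R$-term equals $k^2\int_I \eta^2\cos^2(k^2t)\,\xi\cdot R\xi\,\dt+O(1)$, which is at most $-\tfrac{\delta}{2}k^2\int\eta^2\,\dt\,(1+o(1))+O(1)$. Here we use $\cos^2(k^2t)\rightharpoonup 1/2$ weakly (Riemann–Lebesgue).
\end{itemize}

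Hence $\Q(v_k)\to-\infty$, which contradicts $\Q\ge0$ on $V$.

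The only step that needs care is this oscillation estimate: we must confirm that the frequency $k^2$ and amplitude $1/k$ make the $R$-term dominate the cross and $P$ terms. The scaling above does this, since the $R$-term grows like $k^2$ while the others stay bounded. The key point to note is that periodicity places no restriction here, because compactly supported perturbations are automatically periodic. The genuinely new ingredient compared with the fixed-endpoint case is the constant test function used for (R).
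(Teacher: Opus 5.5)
Your proof is correct and follows the paper's route. Constant test functions give (R), and (L) comes from the classical Legendre argument with compactly supported variations, which are automatically in $V$; the paper only cites the fixed-endpoint argument for (L), while you write out the oscillation estimate explicitly.
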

\begin{proof}
   The proof that (L) holds is the same as in the fixed endpoints problem, since we can restrict ourself to local variations vanishing at the extrema of the interval. The regularity condition (R) follows by taking constant variations $v(t) \equiv v_0 \in \R^n $, because
   \[
      \Q(v) = v_0 \cdot \int_{0}^{T}P(t)\, \dt \, v_0 \geq 0,
   \]
   and this is exactly condition (R).
\end{proof}

\begin{lemma}
   If $\Q(v) \geq 0$ for all $v \in V$ and condition (L') holds, then (C) holds.
   \label{lemma:necessary2}
\end{lemma}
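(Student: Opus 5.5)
The plan is to argue by contradiction, in the spirit of the classical proof that (J) is necessary. Suppose $(y,z)$ is a non-zero solution of Eq.~\eqref{eq:jde} with a semi-coupled point $c\in(0,T)$. Define the periodic test variation
\[
v(t)=\begin{cases} y(t), & t\in[0,c],\\ y(c), & t\in[c,T].\end{cases}
\]
Then $v(0)=y(0)=y(c)=v(T)$, so $v$ is periodic and piecewise $C^1$, with a possible corner at $c$.

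The first step is to show that $\Q(v)=0$.
\begin{itemize}
\item Since $\dot y=Ay+Bz$ with $A=-R^{-1}Q$ and $B=R^{-1}$, we have $z=R\dot y+Qy$. By the Jacobi equation \eqref{eq:jde_2ndorder}, integration by parts on $[0,c]$ gives
\[
\int_0^c\big(y\cdot Py+2\dot y\cdot Qy+\dot y\cdot R\dot y\big)\dt=y(c)\cdot z(c)-y(0)\cdot z(0).
\]
\item On $[c,T]$ the variation is constant, so the contribution is $y(c)\cdot\int_c^T P\,\dt\; y(c)$.
\item Adding the two pieces and using the semi-coupling relation $z(0)=z(c)+\int_c^TP\,\dt\,y(c)$ together with $y(0)=y(c)$ gives $\Q(v)=y(c)\cdot z(0)-y(0)\cdot z(0)=0$.
\end{itemize}

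The second step handles the fact that $v\notin V$ because of the corner. The functional $\Q$ is continuous in the $H^1$ norm and $C^1$ periodic curves are dense in the piecewise $C^1$ periodic ones. Hence $\Q\ge 0$ extends to piecewise $C^1$ periodic variations, and $v$ is a minimizer of $\Q$ on this larger class. Polarization then gives $\Q(v,w)=0$ for every periodic $w$. By the du Bois-Reymond lemma, $R\dot v+Qv$ is continuous and $v$ solves the Jacobi equation on all of $[0,T]$; this step uses (L'). Continuity of $R\dot v+Qv$ at $c$ gives $R(c)\dot y(c^-)=R(c)\cdot 0$. Since $R(c)>0$, this forces $\dot y(c)=0$, so $(y(c),z(c))$ is also the initial datum at $c$ of the solution $v$ on $[c,T]$. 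By uniqueness for the linear system \eqref{eq:jde}, $(y,z)$ coincides with $(v,R\dot v+Qv)$, and so $y$ is constant on $[c,T]$.

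The hardest step is the final contradiction, which must show $y\equiv0$. Being constant on an interval does not by itself force a solution to vanish. If $P\equiv Q\equiv0$ and $R=\Id$, then $\Q\ge0$, (L') holds, and every non-zero constant $y\equiv v_0$, $z\equiv 0$ has every $c$ semi-coupled. So the argument can only exclude solutions that are non-constant on $[c,T]$. I would therefore first try to close the argument using the stronger notion of a \emph{coupled} point, where $y(t)\neq y(c)$ on $(c,T]$ directly contradicts $y$ being constant there, or by quotienting out constant solutions. I expect the statement has to be read with such a non-degeneracy proviso for the proof to go through.
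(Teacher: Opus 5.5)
Your first two steps are the paper's proof. You use the same test function $v_c$ and the same integration by parts to get $\Q(v_c)=0$, followed by ``$v_c$ is a minimizer, so it solves the Jacobi equation, and uniqueness applies''. You are more careful than the paper here. The paper calls $v_c$ a minimizer without noting that $v_c\notin V$. Your $H^1$ density argument, together with the du Bois-Reymond lemma and the resulting corner condition $R(c)\dot y(c^-)=0$, is what makes that step legitimate.

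The paper then concludes ``we would obtain that $v_c(t)$ is constant, which is a contradiction''. Your objection to exactly this step is correct. Uniqueness only yields $y\equiv y(c)$ on $[c,T]$; it does not even give constancy of $v_c$ (see below), and nothing in the hypotheses contradicts it. Your example $P\equiv Q\equiv 0$, $R=\Id$, $(y,z)\equiv(v_0,0)$ satisfies $\Q\ge 0$ and (L'), yet every $c\in(0,T)$ is semi-coupled. So the lemma is false as stated, and no argument can close the gap.

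What your argument does establish is this: under $\Q\ge 0$ and (L'), if $c\in(0,T)$ is semi-coupled for a non-zero solution, then $y\equiv y(c)$ on $[c,T]$. In particular there are no \emph{coupled} points in $(0,T)$, reading the paper's $(t,T]$ as $(c,T]$. The defect does not propagate to (i)$\Rightarrow$(ii) of Theorem~\ref{th:DoslaSufCond}. There $\Q>0$, and your corner argument gives $v_c\in V$, $v_c\neq 0$, $\Q(v_c)=0$, which is already a contradiction for every $c\in(0,T]$.

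Your second proposed repair, quotienting out constant solutions, is not enough, and neither is adding (R'). Take $n=1$, $R\equiv 1$, $Q\equiv 0$. Fix $c\in(0,T)$ and a smooth $\phi\not\equiv 0$ supported in $(0,c)$ with $0\le\phi\le\frac12$, and put $y=1-\phi$, $P=\ddot y/y$. Then $(y,\dot y)$ is a non-constant solution of Eq.~\eqref{eq:jde}. The point $c$ is semi-coupled, because $y(0)=y(c)=1$, $\dot y(0)=\dot y(c)=0$ and $P\equiv 0$ on $[c,T]$. Since $\dot y(0)=\dot y(T)=0$, for every $v\in V$
\[
\Q(v)=\int_0^T\Big(\dot v-\frac{\dot y}{y}\,v\Big)^2\dt\ \ge\ 0 .
\]
This is the identity in the proof of Theorem~\ref{th:DLMsuf} with the Riccati solution $W=\dot y/y$, for which $W(T)-W(0)=0$. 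Moreover $\int_0^T P\,\dt=\int_0^T \dot y^2/y^2\,\dt>0$, so (R') holds as well. Here $v_c=y$ is not constant.

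So the only valid reading is the first one you suggest. Condition (C) must be replaced by the absence, in $(0,T)$, of semi-coupled points at which $y\not\equiv y(c)$ on $[c,T]$; in particular, of coupled points.
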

\begin{proof}
   Suppose that the system of Eq.~\eqref{eq:jde} has a semi-coupled point, i.e. there exist a non-zero solution $(y,z)$ such that for a certain
   $c \in (0,T]$ we have $y(c) = y(0)$ and 
   \[
      z(0) = z(c) + \int_{c}^{T} P(t) \dt \, y(c).
   \]
   Let us define 
   \[
      v_c(t) =
      \begin{cases}
         y(t) & t \in [0,c], \\
         y(c) & t \in [c,T].
      \end{cases}
   \]
   Note that $v_c$ is $T$-periodic, then evaluating the quadratic functional and integrating by
   parts we obtain
   \[
      \begin{split}
         \Q(v_c) & = \int_{0}^{c}\big( y \cdot  P y + 2 \dot{y} \cdot Q y + \dot{y} \cdot
         R \dot{y} \big) \dt + y(c) \cdot \int_{c}^{T} P(t) \, \dt \, y(c) \\
         & = \int_{0}^{c}\big( y \cdot (P y + Q^T \dot{y}) + \dot{y} \cdot (
         Q y + R \dot{y}) \big) \dt + y(c) \cdot \int_{c}^{T} P(t) \, \dt \, y(c)  \\
         & = \int_{0}^{c}\big( y \cdot \dot{z} + \dot{y} \cdot z \big) \dt + y(c) \cdot
         \int_{c}^{T} P(t) \, \dt \, y(c) \\
         & = \big[ y\cdot z \big]_0^c + y(c) \cdot \int_{c}^{T} P(t) \, \dt \, y(c) \\
         & = y(c)\cdot\bigg( z(c)- z(0) +  \int_{c}^{T} P(t) \, dt \, y(c) \bigg) \\
         & = 0.
      \end{split} 
   \]
   This means that $v_c$ is a minimizer of $\Q$. However, by condition (L'), the
   minimizers solve Eq.~\eqref{eq:jde} and, moreover, the
   theorem of uniqueness of the solutions holds. In this case, we would obtain that $v_c(t)$ is constant, which is a contradiction.
\end{proof}

Let us prove the following two lemmas which are useful for later use.
\begin{lemma}
    Let $c \in (0, T]$ and define $N(c) = \int_0^T P(t)\ \dt$. Let us define also
    \[
    \begin{split}
        V^c & =  \{ w \in C^1([0,c], \R^n) : w(0)=w(c)\}, \\
        V_c & = \{ v \in V : v \equiv v(c) \textrm{ on } [c,T] \}.
    \end{split}
    \]
    Define also
    \[
    \Q_c(w) = \int_0^c \big( w\cdot P w + 2 \dot{w} \cdot Q w + \dot{w
    }\cdot R \dot{w} \big)\dt + w(c) \cdot N(c) w(c).
    \]
    Then:
    \begin{itemize}
        \item[(a)] The function $v \mapsto v|_{[0,c]}$ is a bijection between $V_c$ and $V^c$, and $\Q(v) = \Q_c(v|_{[0,c]})$.
        \item[(b)] $\ker \Q_c \neq \{ 0 \}$ if and only if $c$ is semi-coupled with 0.
    \end{itemize}
    \label{lemma:lemmaA}
\end{lemma}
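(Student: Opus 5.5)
The natural reading of the statement, consistent with the computation in the proof of Lemma~\ref{lemma:necessary2}, is $N(c)=\int_c^T P(t)\,\dt$; I will prove it with this reading. Throughout I assume $R(t)$ is non-singular on $[0,T]$, so that Eq.~\eqref{eq:jde} makes sense. I interpret $\ker\Q_c$ as the set of $w\in V^c$ such that the associated symmetric bilinear form $\mathcal{B}_c(w,h)$ vanishes for all $h\in V^c$, where
\[
\mathcal{B}_c(w,h)=\int_0^c\big(h\cdot Pw+\dot h\cdot Qw+\dot w\cdot Qh+\dot h\cdot R\dot w\big)\dt + h(c)\cdot N(c)w(c).
\]

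\emph{Part (a).} Restriction maps $V_c$ into $V^c$, since $v(0)=v(T)=v(c)$. Its inverse extends $w\in V^c$ by the constant $w(c)$ on $[c,T]$. This extension is $C^1$ only if $\dot w(c)=0$; otherwise it is piecewise $C^1$. So either one reads $V$ and $V_c$ in the piecewise-$C^1$ class, as the paper implicitly does in Lemma~\ref{lemma:necessary2}, or one adds that matching condition. I would state this explicitly. For the identity, split $\Q(v)$ over $[0,c]$ and $[c,T]$. On $[c,T]$ we have $\dot v=0$ and $v\equiv v(c)$, so that part equals $v(c)\cdot\int_c^TP\,\dt\, v(c)$, which gives $\Q(v)=\Q_c(v|_{[0,c]})$.

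\emph{Part (b), ``only if''.} Let $0\neq w\in\ker\Q_c$.
\begin{itemize}
\item First test with $h\in C^1_0([0,c])$, which belong to $V^c$ and kill the boundary term. By the du Bois-Reymond lemma, $R\dot w+Qw$ is $C^1$ and $\frac{\d}{\dt}(R\dot w+Qw)=Q^T\dot w+Pw$.
\item Since $R$ is invertible and $P,Q,R\in C^1$, setting $y=w$ and $z=R\dot w+Qw$ gives a solution of Eq.~\eqref{eq:jde}, with $y(0)=y(c)$.
\item Now take a general $h\in V^c$ and integrate by parts, exactly as in Lemma~\ref{lemma:necessary2}:
\[
\mathcal{B}_c(w,h)=[h\cdot z]_0^c+h(c)\cdot N(c)y(c)=h(c)\cdot\big(z(c)-z(0)+N(c)y(c)\big),
\]
using $h(0)=h(c)$.
\item Since $h(c)$ can be any vector in $\R^n$ (take $h$ constant), we get $z(0)=z(c)+\int_c^TP\,\dt\,y(c)$. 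Hence $c$ is semi-coupled with $0$, and $(y,z)\neq0$ because $w\neq0$.
\end{itemize}

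\emph{Part (b), ``if''.} Let $(y,z)\neq0$ solve Eq.~\eqref{eq:jde} and satisfy the semi-coupling conditions at $c$. Set $w=y|_{[0,c]}\in V^c$. The same integration by parts shows $\mathcal{B}_c(w,h)=0$ for every $h\in V^c$. It remains to check $w\neq0$. If $y\equiv0$ on $[0,c]$, then $Bz=\dot y-Ay=0$ there, and $B=R^{-1}$ is invertible, so $z\equiv0$ on $[0,c]$. By uniqueness for the linear system, $(y,z)\equiv0$, a contradiction.

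The main obstacle is technical rather than conceptual: the regularity step via du Bois-Reymond, and the $C^1$ versus piecewise-$C^1$ issue in (a). Both are handled by working in the piecewise-$C^1$ class and using the invertibility of $R$.
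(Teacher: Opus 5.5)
Your proof is correct and follows the same route as the paper. The paper's proof only points to the integration-by-parts computation in Lemma~\ref{lemma:necessary2}; you polarize that computation to $\mathcal{B}_c(w,h)=h(c)\cdot\big(z(c)-z(0)+N(c)y(c)\big)$ and supply the du Bois-Reymond regularity step and the nontriviality check that the paper leaves implicit. Your reading $N(c)=\int_c^T P\,\dt$ and your remark that the extension in (a) is only piecewise $C^1$ unless $\dot w(c)=0$ are both justified corrections to the statement as literally written.
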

\begin{proof}
    The proof follows directly from the proof of Lemma~\ref{lemma:necessary2}.    
\end{proof}

\begin{lemma}
    Let (L') and (R') hold. Then there exists $c_0$ and $\varepsilon>0$ such that $\Q_c(w) \geq \varepsilon \big(|w(0)|^2 + \norm{\dot{w}^2}_{L^2(0,c)}\big)$ for all $c \in (0, c_0]$ and $w \in V^c$.
    \label{lemma:lemmaB}
\end{lemma}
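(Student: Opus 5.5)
The plan is to decompose each $w \in V^c$ into its constant part and a part vanishing at both endpoints, and to show that for small $c$ the two quadratic terms carrying a sign, $\int_0^c \dot w\cdot R\dot w$ and $w(c)\cdot N(c)w(c)$, dominate everything else. Write $a = w(0) = w(c)$ and $\eta(t) = w(t)-a$, so that $\eta(0)=\eta(c)=0$ and $\dot\eta = \dot w$.

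First, the constants.
\begin{itemize}
\item By (L') and continuity on $[0,T]$, there is $r_0>0$ with $R(t)\geq r_0 \Id$ for all $t$.
\item Let $p = \max_t |P(t)|$ and $q = \max_t|Q(t)|$.
\item For the boundary term, $N(c)$ is meant to be $\int_c^T P(t)\,\dt$, as in Lemma~\ref{lemma:lemmaA} and the proof of Lemma~\ref{lemma:necessary2}. Then $N(c)\to\int_0^T P(t)\,\dt$ as $c\to 0$. By (R') the limit is positive definite, so there are $\nu>0$ and $c_1>0$ with $N(c)\geq 2\nu \Id$ for $c\le c_1$. If $N(c)$ is read literally as $\int_0^T P$, this step is immediate.
\end{itemize}

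Next, the Poincaré-type estimates for $\eta$. Since $\eta(0)=0$, Cauchy--Schwarz gives $|\eta(t)|\le \sqrt{c}\,\norm{\dot w}_{L^2(0,c)}$. Hence $\norm{\eta}_{L^2(0,c)}\le c\norm{\dot w}_{L^2(0,c)}$, and therefore
\[
\norm{w}_{L^2(0,c)}\le \sqrt{c}\,|a| + c\norm{\dot w}_{L^2(0,c)}.
\]
We now bound the terms of $\Q_c$ in turn.
\begin{itemize}
\item The $P$-term satisfies $\left|\int_0^c w\cdot Pw\right| \le 2p\big(c|a|^2 + c^2\norm{\dot w}^2_{L^2}\big)$.
\item The cross term satisfies $\left|2\int_0^c \dot w\cdot Qw\right| \le 2q\norm{\dot w}_{L^2}\big(\sqrt c|a| + c\norm{\dot w}_{L^2}\big)$.
\item The main term is the coupling $2q\sqrt{c}\,|a|\,\norm{\dot w}_{L^2}$. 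Young's inequality bounds it by $\delta\norm{\dot w}^2_{L^2} + q^2 c\,|a|^2/\delta$.
\end{itemize}

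Collecting these bounds,
\[
\Q_c(w) \ge \big(r_0 - \delta - 2qc - 2pc^2\big)\norm{\dot w}^2_{L^2(0,c)} + \big(2\nu - 2pc - q^2c/\delta\big)|a|^2 .
\]
Choose $\delta = r_0/2$. Then choose $c_0\le c_1$ small enough that both coefficients exceed $\varepsilon=\min\{r_0/4,\nu\}$ for all $c\le c_0$. This gives the claim, with $\norm{\dot w}^2_{L^2}$ in place of the (typographically garbled) $\norm{\dot w^2}_{L^2}$.

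The only delicate point is the mixed term $\int \dot w\cdot Q a$. It couples the boundary value to the derivative, and neither term has a small factor by itself. The $\sqrt c$ gained from integrating a constant over an interval of length $c$ is what allows Young's inequality to absorb it: part goes into the Legendre term, and the remaining $O(c)|a|^2$ goes into the regularity term supplied by (R').
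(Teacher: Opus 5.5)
Your proof is correct and follows essentially the same route as the paper: write $w$ as $w(0)$ plus a part vanishing at both endpoints, apply a Poincaré-type bound, and use Young's inequality to absorb the $\sqrt{c}$-sized mixed term into the Legendre term and the (R') term. The only differences are cosmetic. The paper merges $\int_0^c w_0\cdot P w_0$ with the boundary term to obtain exactly $w_0\cdot\int_0^T P\,\dt\,w_0$ (which implicitly uses $N(c)=\int_c^T P$, as you noted), and it uses the sharp constant $c/\pi$; you instead treat the whole $P$-term as an $O(c)$ perturbation and rely on continuity of $N(c)$, which also works under the literal reading $N(c)=\int_0^T P$.
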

\begin{proof}
    Let $R \geq k\Id$ and $\mu = \lambda_{\min}(\int_0^TP \ \dt) >0$ because (R') holds. Let $w = w_0 + h$, where $w_0 = w(0) = w(c)$ and $h(0)=h(c)=0$. Then
    \[
    \int_0^c w_0 \cdot P w_0 \ \dt + w_0 \cdot N(c) w_0 = w_0 \cdot \int_0^TP(t) \dt \ w_0 \geq \mu |w_0|^2.
    \]
    Moreover, $\dot{h}=\dot{w}$ and because of the Poincar\'e inequality, we have $\norm{h}_{L^2} < (c/\pi)\norm{\dot{h}}_{L^2}$. Therefore
    \[
    \Q_c(w)\geq \mu |w_0|^2 + \bigg[ k - \norm{P}_\infty \frac{c^2}{\pi^2} - 2 \norm{Q}_\infty \frac{c}{\pi} \bigg] \norm{\dot{h}}^2 - 2 \bigg[ \norm{Q}_\infty\sqrt{c} + \norm{P}_\infty \frac{c^{3/2}}{\pi} \bigg] |w_0| \norm{\dot{h}}.
    \]
    For small $c$, the first term in the brackets is larger than $k/2$, while the second term is larger than $K\sqrt{c}$. From the inequality $2K \sqrt{c} |w_0| \norm{\dot{h}} \leq (\mu/2) |w_0|^2 + (2K^2c/\mu)\norm{\dot{h}}^2$ it follows that
    \[
    \Q_c(w) \geq \frac{\mu}{2}|w_0|^2 + \bigg( \frac{k}{2} - \frac{2K^2c}{\mu} \bigg)\norm{\dot{h}}^2,
    \]
    and the term in the round brackets is positive for small $c$.
\end{proof}

The following is an analogue of the Reid Roundabout Theorem for the fixed-end problem, but specialized to the case of periodic boundary conditions. A more general result obtained for general boundary conditions is given in \cite{dosla-dosly_1997}, but an independent proof specialized for the periodic boundary conditions is given here. 
\begin{theorem}
   Assume that condition $\rm{(L')}$ holds. Then the following properties are equivalent
   \begin{itemize}
      \item[(i)] $\Q(v)> 0 $ for all non-zero $v \in V$.
      \item[(ii)] Conditions $\rm{(C')}$ and $\rm{(R')}$ hold.
      \item[(iii)] Condition $\rm{(J')}$ holds and 
         \begin{equation}
    W_0(T)-W_T(0)-Y_0^{-1}(T)-Y_0^{-T}(T) > 0,
            \label{eq:condIIIperiodic}
         \end{equation}
         where $(Y_0, Z_0)$ and $(Y_T, Z_T)$ are the solutions of Eq.~\eqref{eq:jde_matrix} with initial conditions
         \begin{equation}
             \begin{cases}
                 Y_0(0) = 0, \\
                 Z_0(0) = \Id, 
             \end{cases}
             \quad             
             \begin{cases}
                 Y_T(T) = 0, \\
                 Z_T(T) = -\Id, 
             \end{cases}
         \end{equation}
         and $W_0 = Z_0Y_0^{-1}, \ W_T = Z_TY_T^{-1}$.
   \end{itemize}
   \label{th:DoslaSufCond}
\end{theorem}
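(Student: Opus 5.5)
We prove (i)$\Rightarrow$(ii), then (ii)$\Rightarrow$(i) by a continuation argument in the endpoint $c$, and finally (i)$\Leftrightarrow$(iii) by splitting $\Q$ into a fixed-endpoint part and a finite-dimensional boundary part.

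\textbf{(i)$\Rightarrow$(ii).} For (R'), take constant variations $v\equiv v_0\neq 0$ as in Lemma~\ref{lemma:necessary1}. Strict positivity then gives $v_0\cdot\int_0^T P\,\dt\, v_0>0$. For (C'), suppose some non-zero solution $(y,z)$ has a semi-coupled point $c\in(0,T]$. The computation in Lemma~\ref{lemma:necessary2} gives $\Q(v_c)=0$, so (i) forces $v_c\equiv 0$. Then $y\equiv 0$ on $[0,c]$, hence $z\equiv 0$ by uniqueness, contradicting $(y,z)\neq 0$. Here we use that (L') makes $R$ invertible, so that Eq.~\eqref{eq:jde} is a genuine ODE.

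\textbf{(ii)$\Rightarrow$(i).} By Lemma~\ref{lemma:lemmaA}(a), $\Q(v)=\Q_T(v)$ with $V^T=V$, so it suffices to show that $\Q_c$ is positive definite on $V^c$ for every $c\in(0,T]$. We rescale $[0,c]$ to $[0,1]$ and extend $\Q_c$ to the $H^1$ periodic closure. Let $\lambda(c)$ be the infimum of $\Q_c$ over $\{|w(0)|^2+\norm{\dot w}_{L^2}^2=1\}$. The plan has four steps.
\begin{itemize}
\item[1.] By (L') the form is a compact perturbation of a coercive one, a Gårding-type estimate, so the infimum is attained.
\item[2.] $\lambda$ depends continuously on $c$, since the rescaled coefficients and $N(c)$ are continuous in $c$.
\item[3.] Lemma~\ref{lemma:lemmaB} gives $\lambda(c)>0$ for $c\le c_0$.
\item[4.] If $\lambda$ vanished at a first point $c^*$, the minimizer would be a non-zero element of $\ker\Q_{c^*}$. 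It is $C^1$ and solves Eq.~\eqref{eq:jde} by (L') and regularity, so Lemma~\ref{lemma:lemmaA}(b) makes $c^*$ semi-coupled, contradicting (C').
\end{itemize}
Hence $\lambda(T)>0$, which is (i). The delicate points are the lower semicontinuity and attainment of the minimum, and the regularity of the minimizer. I expect this to be the main technical obstacle.

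\textbf{(i)$\Leftrightarrow$(iii).} Write any $v\in V$ as $v=y+h$, where $h(0)=h(T)=0$ and $y$ is the Jacobi field with $y(0)=y(T)=w:=v(0)$. This Jacobi field exists uniquely when $\det Y_0(T)\neq0$. Integrating by parts, the cross term vanishes and
\[
\Q(v)=\Q(h)+\big[y\cdot z\big]_0^T .
\]
By the classical Reid theorem for fixed endpoints, (J') holds if and only if $\Q(h)>0$ for all non-zero $h$ with $h(0)=h(T)=0$.

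For necessity, (i) restricted to such $h$ gives (J'). For sufficiency, (J') together with positivity of the boundary term gives (i). In the converse direction, taking $h=0$ shows that (i) forces the boundary term to be positive.

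It remains to compute the boundary form. Write
\[
y=Y_0\alpha+Y_T\beta,\qquad z=Z_0\alpha+Z_T\beta.
\]
Imposing $y(0)=y(T)=w$ gives $\beta=Y_T(0)^{-1}w$ and $\alpha=Y_0(T)^{-1}w$. Next, use the constant matrix of Remark~\eqref{rmk:constComb} for the pair $(Y_T,Z_T),(Y_0,Z_0)$. Evaluating it at $t=0$ and at $t=T$ gives $Y_T^T(0)=Y_0(T)$, so $Y_T(0)^{-1}=Y_0^{-T}(T)$. Substituting, we expect
\[
y(T)\cdot z(T)-y(0)\cdot z(0)=w\cdot\big(W_0(T)-W_T(0)-Y_0^{-1}(T)-Y_0^{-T}(T)\big)w.
\]
Here $W_0$ and $W_T$ are symmetric because $(Y_0,Z_0)$ and $(Y_T,Z_T)$ are self-conjoined. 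Positivity of this form for all $w\neq0$ is exactly \eqref{eq:condIIIperiodic}. The care in this step lies in tracking the signs and transposes.
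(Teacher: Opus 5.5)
Your proposal follows the paper's proof essentially step for step: constant variations and the truncated field $v_c$ for (i)$\Rightarrow$(ii); the rescaled continuation in $c$ via $\lambda(c)$, with Lemma~\ref{lemma:lemmaB} near $c=0$ and Lemma~\ref{lemma:lemmaA} at the first zero, for (ii)$\Rightarrow$(i); and the splitting $v=v^*+h$ with $v^*=(Y_0Y_0(T)^{-1}+Y_TY_0(T)^{-T})v(0)$ for (i)$\Leftrightarrow$(iii), where your derivation of $Y_T(0)^T=Y_0(T)$ from the constant matrix of Remark~\ref{rmk:constComb} makes explicit why $Y_T(0)$ is invertible, a point the paper leaves implicit. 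One technical point deserves a line: for $c<T$ the function $v_c$ generally has a corner at $c$ and so is not in $V$, so before invoking (i) you should extend $\Q\ge 0$ by density to piecewise $C^1$ periodic functions, note that $v_c$ is then a minimizer, and use the corner condition (continuity of $R\dot v+Qv$, with $R$ invertible) to get $\dot y(c)=0$, which shows that in fact $v_c\in V$.
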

\begin{proof}
(i)$\Rightarrow$(ii) This follows using similar proofs of Lemma~\ref{lemma:necessary1} and Lemma~\ref{lemma:necessary2}.

(i)$\Rightarrow$(iii) Since $\Q >0$ on any non zero $v \in V$, in particular $Q>0$ for any $v \in V_0 = \{ v \in V: v(0)=v(T)=0\}$, which is the variation space for the classical fixed end-point problem. Therefore (J') holds, and hence $Y_0, Y_T$ are invertible in $(0,T]$ and $[0,T)$, respectively. Let $x\in \R^n$ and define
\[
v^*(t) = \big( Y_0(t)Y_0(T)^{-1} + Y_T(t)Y_0(T)^{-T} \big)x.
\]
Then $v^*(0)=v^*(T) = x$ and, since $(Y_0, Z_0), (Y_T, Z_T)$ solve Eq.~\eqref{eq:jde_matrix}, then 
\[
\Q(v^*) = x \cdot \big( W_0(T)-W_T(0)-Y_0^{-1}(T)-Y_0^{-T}(T) \big)x,
\]
which is positive definite by hypothesis. 

(iii)$\Rightarrow$(i) Let $v\in V$ and set $v_0 = v(0)=v(T)$. Let us define 
\[
v^*(t) = \big( Y_0(t)Y_0(T)^{-1} + Y_T(t)Y_0(T)^{-T} \big)v_0.
\]
Set $h = v-v^*$, and note that $h \in V_0$, i.e. it vanishes at the extrema. Then we can write, by bi-linearity:
\[
\Q(v) = \Q(v^*) + 2 \mathcal{B}(v^*,h) + \Q(h).
\]
Since $Y_0, Y_T$ are solutions of Eq.~\eqref{eq:jde_matrix} and $h\in V_0$, by integrating by parts we obtain $\mathcal{B}(v^*,h)=0$. Moreover, using the same argument of (i)$\Rightarrow$(iii), we have
\[
\Q(v^*) = v_0 \cdot \big( W_0(T)-W_T(0)-Y_0^{-1}(T)-Y_0^{-T}(T) \big)v_0.
\]
This is therefore positive by assumption. In addition, $\Q(h)>0$ because $h$ is a competitor for the fixed end-point problem and (J') holds, therefore $\Q(v)>0$ follows.

(ii)$\Rightarrow$(i)
Let us define 
\[
H = \{ \hat{w} \in H^1([0,1], \R^n): \hat{w}(0)=\hat{w}(1) \},
\]
and for $\hat{w} \in H$ set $w(t) = \hat{w}(t/c)$. We define also
\[
F_c(\hat{w}) = \Q_c(w)=\int_0^1\big[ c \hat{w}\cdot P(c\tau)\hat{w} + 2\hat{w}' \cdot Q(c\tau)\hat{w} + \frac{1}{c}\hat{w}'\cdot R(c\tau)\hat{w}' \big]\d\tau +\hat{w}(1)\cdot N(c) \hat{w}(1).
\]
For $c \in [c_0, T]$, where $c_0$ is as in Lemma~\ref{lemma:lemmaB}, the coefficients depends continuously in $c$ and uniformly in $\tau$, therefore $c \mapsto F_c$ is continuous with respect to all the bounded quadratic forms on $H$. Let
\[
\lambda(c)=\inf \{ F_c(\hat{w}) : \norm{ \hat{w}}_H = 1 \}.
\]
Then $|\lambda(c)-\lambda(c')| \leq \norm{ F_c - F_{c'}}$, hence $\lambda$ is a continuous function. In addition, from condition (L'), we have
\[
F_c(\hat{w}) \geq \frac{k}{2T} \norm{\hat{w}'}^2_{L^2} - C \norm{\hat{w}}_{L^2}.
\]
Let us now prove that if $\lambda(c)=0$ then $\ker F_c \neq \{ 0 \}$. Let $\{ \hat{w}_j\} \subseteq H$ be a sequence such that $\norm{\hat{w}_j}_H = 1$ and $F_c(\hat{w}_j) \to 0$. Up to subsequences, there exists $\hat{w}$ such that $\hat{w}_j \to \hat{w}$ in $H$ and $\hat{w}_j \to \hat{w}$ in $L^2$ and in $C^0$. The term $\int \hat{w}' \cdot R \hat{w}'$ is convex and therefore weakly lower semi-continuous, while the other terms are weakly continuous. Therefore $F_c(\hat{w}) \leq \liminf_j F_c(\hat{w}_j) = 0$. On the other hand, $\lambda(c)=0$ implies $F_c \geq 0$, therefore $F_c(\hat{w})=0$. Moreover $\hat{w} \neq 0$ because $\norm{\hat{w}_j}_H = 1$, therefore $\ker F_c \neq \{ 0 \}$. 

Let us now suppose that there exists a non-zero $v\in V$ such that $\Q(v) \leq 0$, hence $\lambda(T) \leq 0$. Because of Lemma~\ref{lemma:lemmaB}, $\lambda(c)>0$ for $c \in (0, c_0]$. Let
\[
c^* = \inf \{c \in (0,T] : \lambda(c)\leq 0 \} \in [c_0, T].
\]
By continuity, $\lambda(c^*)=0$, then $\ker F_{c^*} \neq \{0\}$ and by Lemma~\ref{lemma:lemmaA} $c^* \in (0,T]$ is a semi-coupled point with 0, which is in contradiction with the hypothesis. Therefore $\Q(v) > 0$ for all non-zero $v \in V$.
\end{proof}

\begin{remark}
From a numerical point of view, some conditions of Theorem~\ref{th:DoslaSufCond} are easier to check than others. It is clear that (R') can be checked by numerically computing an integral, and check if the resulting matrix is positive definite by computing its eigenvalues. However, condition (C') cannot be fully checked numerically, since it takes into account all the solutions of Eq.~\eqref{eq:jde}. 
On the other hand, conditions (J') and Eq.~\eqref{eq:condIIIperiodic} can be managed numerically. In fact, one can
\begin{itemize}
      \item[(1)] compute the solution $(Y_0,Z_0)$ of Eq.~\eqref{eq:jde_matrix} by numerical integration, using the initial conditions $Y_0(0)=0, \,
         Z_0(0) = \Id$ on $[0,T]$. Then
         check whether $\det Y_0(t) \neq 0$ for all $t \in (0,T]$ to check if (J') holds;
      \item[(2)] compute the solution $(Y_T, Z_T)$ of Eq.~\eqref{eq:jde_matrix} by numerical integration on $[0,T]$ backwards in time, using the initial  conditions $Y_T(T) = 0, \, Z_T(T)=-\Id$;
      \item[(3)] invert the matrices $Y_0(T), \, Y_T(0)$, compute $W_0(T), W_T(0)$ and construct the matrix in Eq.~\eqref{eq:condIIIperiodic}; 
      \item[(4)] compute the eigenvalues of the matrix in Eq.~\eqref{eq:condIIIperiodic} and check whether they are all positive or not.
\end{itemize}
\end{remark}

\subsection{Approach with Riccati equation}

We describe here an approach based on the existence of solutions of the Riccati equation with particular boundary conditions, similar to that presented in \cite{zeidan_1993, zeidan_2001}. 
We introduce the following condition. 
\begin{itemize}
   \item[(SR)] Condition (SR) is satisfied if there exists a symmetric
      solution $W(t)$ of the Riccati equation \eqref{eq:rde}, defined on the whole interval
      $[0,T]$, such that
      \begin{equation}
         W(T)-W(0) > 0.
         \label{eq:riccatiBC}
      \end{equation}
\end{itemize}
This condition is sufficient to have a positive definite quadratic functional in
the case of periodic boundary conditions. 
\begin{theorem}
   Let conditions $\rm{(L')}$ and $\rm{(SR)}$ hold. Then $\Q(v) > 0$ for all
   non-zero $v \in V$. 
   \label{th:DLMsuf}
\end{theorem}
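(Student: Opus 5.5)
The plan is the classical Legendre completion-of-squares argument, with the boundary term now handled by the inequality \eqref{eq:riccatiBC} instead of vanishing endpoints. Let $W$ be the symmetric solution of \eqref{eq:rde} on $[0,T]$ given by (SR), and let $v\in V$. Since $W$ is $C^1$ and symmetric, we have
\[
\int_0^T \frac{\d}{\dt}\big(v\cdot Wv\big)\dt = v(T)\cdot W(T)v(T) - v(0)\cdot W(0)v(0) = v_0\cdot\big(W(T)-W(0)\big)v_0,
\]
where $v_0=v(0)=v(T)$. Expanding the derivative gives $\dot v\cdot Wv + v\cdot\dot W v + v\cdot W\dot v$. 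The idea is to subtract this exact derivative from the integrand of $\Q$ and add back the boundary term. This yields
\[
\Q(v) = v_0\cdot\big(W(T)-W(0)\big)v_0 + \int_0^T \Big( v\cdot (P-\dot W)v + 2\dot v\cdot (Q-W)v + \dot v\cdot R\dot v\Big)\dt .
\]

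Next I complete the square in $\dot v$, using that $R$ is invertible by (L'). The integrand equals
\[
(\dot v + R^{-1}(Q-W)v)\cdot R\,(\dot v + R^{-1}(Q-W)v) + v\cdot\big(P-\dot W-(Q-W)^TR^{-1}(Q-W)\big)v .
\]
I then substitute $A=-R^{-1}Q$, $B=R^{-1}$, $C=P-Q^TR^{-1}Q$ and use the symmetry of $W$. This gives
\[
(Q-W)^TR^{-1}(Q-W) = Q^TR^{-1}Q - Q^TR^{-1}W - WR^{-1}Q + WR^{-1}W ,
\]
which can be rewritten as $P - C + A^TW + WA + WBW$. Hence the zero-order coefficient is $C-\dot W - WA-A^TW-WBW$, which vanishes by \eqref{eq:rde}. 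This algebraic check, in particular getting the signs and transposes to match the Riccati equation exactly as written, is the only computational step and where care is needed. We arrive at
\[
\Q(v)= v_0\cdot\big(W(T)-W(0)\big)v_0 + \int_0^T \xi\cdot R\,\xi\,\dt,\qquad \xi=\dot v - (BW + A)v .
\]

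Both terms are nonnegative, the first by \eqref{eq:riccatiBC} and the second by (L'). Suppose $\Q(v)=0$. Then $v_0=0$, because $W(T)-W(0)$ is positive definite. Also $\xi\equiv0$, because $R>0$ and $\xi$ is continuous. Thus $v$ solves the linear ODE $\dot v=(A+BW)v$ with continuous coefficients on $[0,T]$ and $v(0)=0$. By uniqueness of solutions $v\equiv 0$. Therefore $\Q(v)>0$ for every non-zero $v\in V$.

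The only subtle point is that the argument needs $W$ defined and $C^1$ on the whole closed interval $[0,T]$, so that the integration by parts is legitimate with no singularities. This is exactly what (SR) provides, and it is why no Jacobi-type condition needs to be assumed separately.
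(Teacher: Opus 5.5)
Your proposal is correct and is the paper's own argument: subtracting and adding $\frac{\d}{\dt}(v\cdot Wv)$ and completing the square gives $\Q(v)=v(0)\cdot\big(W(T)-W(0)\big)v(0)+\int_0^T (R\dot v+Qv-Wv)\cdot B\,(R\dot v+Qv-Wv)\,\dt$, which is your identity since $R\xi=R\dot v+Qv-Wv$. Your closing step (if $\Q(v)=0$ then $v(0)=0$ and $\dot v=(A+BW)v$, hence $v\equiv 0$) is a useful addition, because the paper does not explicitly justify strict positivity for nonzero $v$ with $v(0)=0$.
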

\begin{proof}
   The proof was presented already in \cite{fenucci_2022}, but we report it here for completeness.
   Given $v \in V, \ v\neq0$, then
   \[
      \begin{split}
         \Q(v) & = \int_{0}^{T}\bigg( v \cdot P v + 2 \dot{v} \cdot Q v + \dot{v} \cdot R
         \dot{v} - \frac{\d}{\dt}(v \cdot W v) + \frac{\d}{\dt}(v \cdot W v) \bigg) \dt \\
         & = v(0)\cdot\big(W(T)-W(0)\big)v(0) + \int_{0}^{T}\big(
         R\dot{v}+Qv-Wv\big)\cdot B\big( R\dot{v}+Qv-Wv\big)\dt ,
      \end{split}
   \]
   where we used $v(0)=v(T)$. Since (L') holds, $B$ is positive definite for all $t \in [0,T]$, then the function in the integral
   is non-negative. Since $W(T)-W(0)$ is positive definite by hypothesis, we have  $\Q(v) > 0$. 
\end{proof}

\begin{proposition}
    Let (L') and (J') hold. Let $Y_1= Y_T Y_0(T)^{-T}$. Then $W$ is a symmetric solution of the Riccati equation defined on the whole interval $[0,T]$ if and only if it is of the form $W=ZY^{-1}$, with
    \[
    \begin{cases}
            Y = Y_0 + Y_1S, \\
            Z = Z_0 + Z_1S,
    \end{cases}
    \]
    and $S=W(0)-W_T(0)^{-1}$ is a positive definite symmetric matrix. Moreover, we have 
    \begin{equation}
        W(T)-W(0)= W_0(T)-W_T(0) - Y_0(T)^{-T}SY_0(T)^{-1} - S^{-1}.
        \label{eq:generalRiccatiSolution}
    \end{equation}
    \label{prop:RiccatiCharacterization}
\end{proposition}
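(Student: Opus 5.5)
The plan is to work in the basis of solutions $(Y_0,Z_0)$ and $(Y_1,Z_1)$, where $Z_1:=Z_TY_0(T)^{-T}$, and to reduce everything to the single matrix function $Y_0^{-1}Y_1$ on $(0,T]$.

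\textbf{Step 1 (the two basis solutions).} By (J') the matrix $Y_0$ is invertible on $(0,T]$ and $Y_T$ is invertible on $[0,T)$. By the Remark on constant combinations, $(Y_0,Z_0)$ is self-conjoined, since the combination vanishes at $t=0$. Likewise $(Y_1,Z_1)$ is self-conjoined, since it vanishes at $t=T$. The cross term $Y_0^TZ_1-Z_0^TY_1$ is constant. Evaluating it at $t=T$ gives $Y_0(T)^T(-\Id)Y_0(T)^{-T}=-\Id$. Evaluating it at $t=0$ gives $-Y_1(0)$. Hence $Y_1(0)=\Id$.

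Next apply Proposition~\ref{prop:generalSolJDE} on $(0,T]$ with base solution $(Y_0,Z_0)$ and $a=T$. This gives $N=-\Id$, and $M=0$ because $Y_1(T)=0$. Therefore
\[
Y_0^{-1}(t)Y_1(t)=G(t):=\int_t^T Y_0^{-1}BY_0^{-T}\,\d s .
\]
By (L'), $G(t)$ is positive definite for $t\in(0,T)$ and $G(T)=0$. I also expect $G(t)\to+\infty$ as $t\to0^+$, in the sense that its smallest eigenvalue diverges. This should follow from $Y_0(t)\sim tB(0)$ near $0$, which makes the integrand behave like $s^{-2}B(0)^{-1}$.

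\textbf{Step 2 ($\Leftarrow$).} Let $S$ be symmetric positive definite, and set $Y=Y_0+Y_1S$, $Z=Z_0+Z_1S$. Expanding $Y^TZ-Z^TY$ with the identities from Step 1 gives $-S+S^T=0$, so $(Y,Z)$ is self-conjoined. At $t=0$ we have $Y(0)=Y_1(0)S=S$, which is invertible. On $(0,T]$ we can write
\[
Y=Y_0(\Id+GS)=Y_0(S^{-1}+G)S,
\]
and $S^{-1}+G>0$ there. So $Y$ is nonsingular on $[0,T]$, and $W=ZY^{-1}$ is a symmetric Riccati solution on the whole interval, as observed after \eqref{eq:rde}.

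\textbf{Step 3 ($\Rightarrow$).} Let $W$ be a symmetric solution on $[0,T]$. Let $Y$ solve $\dot Y=(A+BW)Y$ with $Y(0)=\Id$, and set $Z=WY$. Then $(Y,Z)$ solves \eqref{eq:jde_matrix}, is self-conjoined, and $Y$ is nonsingular on $[0,T]$. Since the columns of $(Y_0,Z_0)$ and $(Y_1,Z_1)$ span the solution space, we can write $(Y,Z)=(Y_0,Z_0)M'+(Y_1,Z_1)S'$. Evaluating at $T$ gives $Y(T)=Y_0(T)M'$, so $M'$ is invertible. Right-multiplying by $M'^{-1}$ does not change $W$ and puts the solution in the stated form with $S=S'M'^{-1}$.

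The properties of $S$ then follow from three observations. Self-conjoinedness together with the computation of Step 2 forces $S=S^T$. Evaluating at $t=0$ gives $Y(0)=S$ up to the right factor, so $S$ is invertible. For positivity, consider the continuous family $S^{-1}+G(t)$, $t\in(0,T]$. It is never singular, because $Y=Y_0(S^{-1}+G)S$ is nonsingular and $S$ is invertible. By the blow-up of $G$, all its eigenvalues are positive for small $t$. By continuity no eigenvalue can cross zero, so at $t=T$ we get $S^{-1}>0$.

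\textbf{Step 4 (the formula).} Since $Z_1(T)=-Y_0(T)^{-T}$ and $Y_1(T)=0$,
\[
W(T)=(Z_0(T)+Z_1(T)S)Y_0(T)^{-1}=W_0(T)-Y_0(T)^{-T}SY_0(T)^{-1}.
\]
At the other endpoint, using $Y_1(0)=\Id$ and $Z_1(0)=W_T(0)$,
\[
W(0)=(\Id+Z_1(0)S)S^{-1}=S^{-1}+W_T(0).
\]
Subtracting gives \eqref{eq:generalRiccatiSolution}. It also shows that $S=(W(0)-W_T(0))^{-1}$, which I read as the intended meaning of the expression for $S$ in the statement.

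The main obstacle is the positivity argument in Step 3. It depends on the asymptotics of $G(t)$ as $t\to0^+$, namely that all eigenvalues of $G(t)$ diverge. I would derive this by expanding $Y_0(t)=tB(0)+O(t^2)$, using $Y_0(0)=0$ and $\dot Y_0(0)=B(0)Z_0(0)=B(0)$, with $B(0)>0$ by (L').
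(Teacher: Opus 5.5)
Your proof is correct and follows essentially the same route as the paper's. Both use the decomposition $Y=Y_0+Y_1S$, self-conjoinedness to force $S=S^T$, and positivity of $S$ from the blow-up of a reduction-of-order integral plus continuity of eigenvalues. The only difference is the anchoring: the paper factors through the other basis solution, writing $Y=Y_1(S+S_1)$ with $S_1(t)=\int_0^t Y_1^{-1}BY_1^{-T}\,\d s\to+\infty$ as $t\to T^-$, whereas you write $Y=Y_0(S^{-1}+G)S$ with the blow-up at $0^+$. Your mirrored version is slightly more economical, since it needs only $\det Y_0\neq 0$ on $(0,T]$ (which is (J') verbatim) and it explicitly gets invertibility of $S$ from $Y(0)=S$; your reading $S=(W(0)-W_T(0))^{-1}$ also matches the paper's proof.
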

\begin{proof}
    Let us assume (SR) and be $W$ the corresponding solution of the Riccati equation. Every solution of Eq.~\eqref{eq:jde_matrix} can be written as $(Y_0A + Y_TN, Z_0A + Y_TN)$. By setting $Y(0)=0$ we get $N = Y_T(0)^{-1}$ and $A=W(0)-W_T(0)$. Let us consider then the solution $(Y,Z) = \big((Y_0A + Y_TN)S, (Z_0A+Z_TN)S \big)$, where $S = (W(0) - W_T(0))^{-1}$. It is easy to verify that $(Y,Z)$ is self-conjoined. Let us assume that $W' = ZY^{-1}$ is defined on $[0,T]$. Then 
    \begin{equation}
            \begin{split}
        W'(0) & = S^{-1} + W_T(0), \\
        W'(T) & = W_0(T) - Y_0(T)^{-T}SY_0(T)^{-1}.
    \end{split}
    \label{eq:newRiccati}
    \end{equation}
    Let us now prove that $Y$ is invertible. By Proposition~\ref{prop:generalSolJDE}, $Y_0(t) = Y_1(t)S_1(t)$, where $(Y_1, Z_1) = (Y_T Y_0(T)^{-T}, Z_TY_0(T)^{-T})$. Then $Y(t)=Y_1(t)(S + S_1(t))$.
    Since $B>0$ and $Y_1(T)=0$, we have $Y_1^{-1}BY_1^{-T} \sim K/(t-T)^2$, with $K>0$, hence $S_1(t) \to + \infty$ in every direction for $t\to T^{-}$. If $S$ had a negative eigenvalue, then the number of negative eigenvalues of $S_1(t) + S$ would pass from $\geq 1$ to 0, and by continuity also some eigenvalue would vanish in $(0,T)$, hence $Y$ would be singular. This is however not possible because (J') holds, hence $S>0$ and, viceversa, if $S>0$ then $S_1+S > 0$.

    Conversely, if $S$ is a positive definite symmetric matrix, then $W'$ as defined before is a symmetric solution of the Riccati equation defined on the whole $[0,T]$ satisfying Eq.~\eqref{eq:riccatiBC} and such that Eq.~\eqref{eq:generalRiccatiSolution} holds.
\end{proof}
\begin{corollary}
    Let (L') and (J') hold. Then (SR) holds if and only if there exists a positive definite symmetric matrix $S$ such that $W_0(T) - W_T(0) > Y_0(T)^{-T}SY_0(T)^{-1} + S^{-1}$.
    \label{cor:SRcharacterization}
\end{corollary}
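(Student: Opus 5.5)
This follows directly from Proposition~\ref{prop:RiccatiCharacterization}, by reading formula \eqref{eq:generalRiccatiSolution} in both directions. Throughout we assume (L') and (J'). By (J'), $Y_0(T)$ and $Y_T(0)$ are invertible, so $W_0(T)$, $W_T(0)$ and the matrix $Y_1=Y_TY_0(T)^{-T}$ used in the Proposition are well defined.

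\emph{Necessity.} Assume (SR), and let $W$ be a symmetric solution of \eqref{eq:rde} on $[0,T]$ with $W(T)-W(0)>0$. Proposition~\ref{prop:RiccatiCharacterization} gives $W=ZY^{-1}$ with $Y=Y_0+Y_1S$ and $Z=Z_0+Z_1S$. Here $S$ is built from $W(0)$ and $W_T(0)$, and the Proposition guarantees that $S$ is symmetric positive definite. Substituting into \eqref{eq:generalRiccatiSolution}, the inequality $W(T)-W(0)>0$ becomes
\[
W_0(T)-W_T(0)-Y_0(T)^{-T}SY_0(T)^{-1}-S^{-1}>0,
\]
which is exactly the claimed matrix inequality.

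\emph{Sufficiency.} Suppose some symmetric $S>0$ satisfies the inequality. Define $Y=Y_0+Y_1S$, $Z=Z_0+Z_1S$ and $W=ZY^{-1}$. By the converse part of Proposition~\ref{prop:RiccatiCharacterization}, $W$ is a symmetric solution of the Riccati equation on all of $[0,T]$ and satisfies \eqref{eq:generalRiccatiSolution}. The hypothesis then gives $W(T)-W(0)>0$, so (SR) holds.

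\emph{Main obstacle.} The only real work is making sure the converse part of Proposition~\ref{prop:RiccatiCharacterization} actually supplies what sufficiency needs. This requires three checks.
\begin{itemize}
\item[(1)] $(Y,Z)$ is self-conjoined, so that $W$ is symmetric. Compute the constant $Y^TZ-Z^TY$ at $t=T$. There $Y_1(T)=0$, $Z_1(T)=-Y_0(T)^{-T}$, and $Y_0(T)^TZ_0(T)$ is symmetric. The resulting combination is symmetric because $S$ is.
\item[(2)] $Y(t)$ is invertible on $[0,T]$. Write $Y=Y_1(S+S_1)$ as in the Proposition. On $[0,T)$, $S_1\geq0$ and $S>0$ give $S+S_1>0$. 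At the endpoint $t=T$, $Y(T)=Y_0(T)$, which is invertible by (J').
\item[(3)] Formula \eqref{eq:generalRiccatiSolution} holds. Evaluate $W$ at $t=0$ and $t=T$ using the endpoint values of $Y_0,Z_0,Y_1,Z_1$.
\end{itemize}
Once these are confirmed, both directions are immediate.
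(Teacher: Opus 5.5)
Your proof is correct and follows the paper's route. The paper gives no separate argument for this corollary: it is read off from Proposition~\ref{prop:RiccatiCharacterization} and formula \eqref{eq:generalRiccatiSolution} in both directions, just as you do. Your explicit checks of the converse part are sound and make explicit what the Proposition's proof leaves terse: self-conjoinedness from $S=S^T$, invertibility of $Y=Y_1(S+S_1)$ on $[0,T)$ together with $Y(T)=Y_0(T)$, and the endpoint values $W(0)=S^{-1}+W_T(0)$ and $W(T)=W_0(T)-Y_0(T)^{-T}SY_0(T)^{-1}$.
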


\begin{remark}
Note that one could try to match the sufficient conditions of Theorem \ref{th:DoslaSufCond} point \emph{(iii)} with the (SR) condition on the existence of a symmetric solution of the Riccati equation with boundary conditions. In fact, these conditions are equivalent for the classical case of the fixed end-point problem, since it is known that the absence of conjugate points leads to the existence of a symmetric solution of the Riccati equation. More generally, the same can be done when the endpoints are disjoined, see \cite{zeidan_1993}. 
   Note that the matrices in Eq.~\eqref{eq:condIIIperiodic} and in Eq.~\eqref{eq:generalRiccatiSolution} look
   similar. Therefore, one could try to deduce the (SR) condition assuming that (J') holds and
   Eq.~\eqref{eq:condIIIperiodic} holds, by matching the matrix of Eq.~\eqref{eq:condIIIperiodic} with that of Eq.~\eqref{eq:generalRiccatiSolution}.
   These are the same if an only if $S=Y_0(T)$, and condition (SR) holds if and only if $Y_0(T)$ is symmetric and positive definite. However, this last condition is generally not true.
   \label{rmk:rmk1}
\end{remark}

\begin{remark}
        The only exception to Remarks~\ref{rmk:rmk1} is for dimension $n=1$. In fact, the transpose and the order of the multiplications are not relevant in dimension one, hence the (SR) condition is equivalent to (J') together with Eq.~\eqref{eq:condIIIperiodic}, and to all the other equivalent conditions of Theorem~\ref{th:DoslaSufCond}. 
   \label{rmk:riccatiProblems}
\end{remark}

\subsubsection{(SR) is not a necessary condition}
We now give a counterexample to show that the positivity of a quadratic functional $\Q$ does not imply (SR).
Let us set $T=1$, $n=2$, and
\[
J = \begin{bmatrix}
    0 & -1\\
    1 & 0
\end{bmatrix}.
\]
Let us also set $\omega = 2, k = \pi/2$ and take
\[
L(u,\dot{u}) = \frac{1}{2}\norm{\dot{u}+\omega J u}^2 - \frac{1}{2}k^2\norm{u}^2.
\]
In physical terms, this is the Lagrangian describing the motion of a particle in a rotating system moving under a repulsing potential. 
Note that $u_0 \equiv 0$ is a periodic solution of the Euler-Lagrange equation associated to the functional $\A(u)=\int_0^1L(t,u,\dot{u})\dt$. The second variation along $u_0$ is:
\[
\Q(v) = \int_0^1 \big( \norm{\dot{v}}^2 +2 \omega\ \dot{v}\cdot Jv + (\omega^2-k^2)\norm{v}^2 \big)\dt,
\]
hence $R=\Id, Q=\omega J, P=(\omega^2-k^2)\Id$. Therefore conditions (L') and (R') hold. The Jacobi equation is therefore
\[
\begin{cases}
    \dot{Y} = -\omega J Y + Z, \\
    \dot{Z} = -k^2Y + \omega J^TZ.
\end{cases}
\]
By using the substitution $(Y', Z') = (e^{-\omega J t}Y, e^{-\omega J t}Z)$, the Jacobi equation becomes 
\[
\begin{cases}
    \dot{Y}' = Z', \\
    \dot{Z}' = -k^2Y'.
\end{cases}
\]
Therefore the general solution of the Jacobi equation in these variables is
\[
\begin{cases}
    Y'(t) = A \cos(kt) + B \sin(kt), \\
    Z'(t) = -Ak \sin(kt) + Bk\cos(kt), 
\end{cases}
\]
hence it can be computed that 
\[
\begin{cases}
    \displaystyle Y_0(t)=\frac{\sin(kt)}{k}e^{-\omega Jt}, \\
    Z_0(t)=\cos(kt) e^{-\omega Jt}.
\end{cases}
\]
Then $\det Y_0(t) = \sin^2(kt)/k$, which is positive on the whole $(0,1]$ because $k=\pi/2 < \pi$. Therefore the strengthened Jacobi condition (J') holds.
In addition, 
\[
\begin{cases}
    \displaystyle Y_T(t) = \frac{\pi}{2}\cos(kt)e^{\omega J(T-t)}, \\
    \displaystyle Z_T(t) = -\frac{\pi}{2}k\sin(kt)e^{\omega J(T-t)}.
\end{cases}
\] 
Therefore $W_0(T) = 0$ and $W_T(0)=0$, while $Y_0(T)^{-1} = \frac{\pi}{2}e^{\omega J}$, hence $-(Y_0(T)^{-1} + Y_0(T)^{-T}) = -\pi \cos(2) \Id$, which is positive definite since $\cos(2) \approx -0.4161$. Therefore Eq.~\eqref{eq:condIIIperiodic} holds, and by Theorem~\ref{th:DoslaSufCond} the quadratic functional is positive definite.

On the other hand, the condition (SR) is not satisfied.  In fact, by Corollary~\ref{cor:SRcharacterization} we would need $Y_0(T)^{-T}SY_0(T)^{-1} + S^{-1} < 0$, but this is not possible because $S>0$ is required. Therefore, there is no symmetric positive definite solution $W$ to the Riccati equation defined on $[0,T]$ such that $W(T)-W(0)>0$.

\section{Weak local minimizers}
\label{s:wlm}
Using the theory of quadratic functionals, we can find sufficient conditions for (WLM) for the case of periodic boundary conditions. 
\begin{lemma}
   Let $\Q$ be a quadratic functional defined on $V$. Then there exists $C>0$ such
   that
   \begin{equation}
      \lvert \Q(v) \rvert \leq C \int_{0}^{T}\bigg( \lvert v \rvert^2 
      + \lvert \dot{v} \rvert^2 \bigg) \dt,
      \label{eq:quadIneq1}
   \end{equation}
   for all $v \in V$.
   \label{lemma:WLM1}
\end{lemma}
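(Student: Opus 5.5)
The plan is to bound each of the three terms of $\Q(v)$ pointwise in $t$ by the sup norms of the coefficient matrices, and then integrate. The only analytic input is that $P,Q,R$ are $C^1$, hence continuous, on the compact interval $[0,T]$. Consequently the constants
\[
p=\max_{t\in[0,T]}\norm{P(t)},\quad q=\max_{t\in[0,T]}\norm{Q(t)},\quad r=\max_{t\in[0,T]}\norm{R(t)}
\]
are finite, where $\norm{\cdot}$ denotes the operator norm induced by the Euclidean norm.

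Next, for every $t$, I apply Cauchy--Schwarz to each summand of the integrand:
\[
\lvert v\cdot Pv\rvert\leq p\lvert v\rvert^2,\qquad \lvert \dot v\cdot R\dot v\rvert\leq r\lvert \dot v\rvert^2,\qquad \lvert 2\dot v\cdot Qv\rvert\leq 2q\lvert\dot v\rvert\lvert v\rvert\leq q\big(\lvert v\rvert^2+\lvert\dot v\rvert^2\big).
\]
The cross term is the only one needing a further step, and Young's inequality $2ab\leq a^2+b^2$ handles it. Summing the three bounds and using $\lvert\int f\rvert\leq\int\lvert f\rvert$ gives
\[
\lvert\Q(v)\rvert\leq \int_0^T\big((p+q)\lvert v\rvert^2+(q+r)\lvert\dot v\rvert^2\big)\dt.
\]
This yields \eqref{eq:quadIneq1} with $C=\max\{p+q,\,q+r\}$, or $C=p+2q+r+1$ if a strictly positive constant is preferred.

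There is no genuine obstacle here. The one point to verify is that the constant is uniform in $v$, which holds because $C$ depends only on the coefficients. Periodicity of $v$ plays no role in this estimate, so the same argument works for any $C^1$ curve.
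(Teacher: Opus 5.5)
Your proof is correct and matches the paper's argument: you bound each term pointwise by the sup norm of its coefficient and use $2\lvert v\rvert\lvert\dot v\rvert\leq\lvert v\rvert^2+\lvert\dot v\rvert^2$ on the cross term, and the paper's constant $C=\max\{\norm{Q}_\infty+\norm{R}_\infty,\norm{Q}_\infty+\norm{P}_\infty\}$ is exactly your $\max\{q+r,p+q\}$. Prefer that constant (plus an arbitrarily small $\delta>0$ if you want strict positivity) over $p+2q+r+1$, because Remark~\ref{rmk:smallCoeff} and Lemma~\ref{lemma:WLM3} later need $C$ to become small when $P,Q,R$ are small in sup norm.
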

\begin{proof}
   It is sufficient to note that
   \[
      2\lvert v \rvert \lvert \dot{v} \rvert \leq \lvert v \rvert^2 + \lvert
      \dot{v} \rvert^2,
   \]
   Then we have that
   \[
      \lvert \Q(v) \rvert \leq C_1 \int_{0}^{T} \lvert \dot{v} \rvert^2 \dt \, + C_2
      \int_{0}^{T} \lvert v \rvert^2 \dt,
   \]
   where the constants $C_1, C_2$ are
   \[
      C_1 = \norm{Q}_\infty+\norm{R}_\infty, \quad C_2 = \norm{Q}_\infty+\norm{P}_\infty.
   \]
   Then taking $C=\max\{C_1,C_2\}$ the thesis follows.
\end{proof}
\begin{remark}
   Note that the coefficient $C$ is small if $C_1$ and $C_2$ are both small. This happens
   when the matrix functions $P,Q,R$ are small in the sup norm. 
   \label{rmk:smallCoeff}
\end{remark}
\begin{lemma}
   Let $\Q$ be a positive definite quadratic functional defined on $V$ and satisfying
   $\rm{(L')}$. Then there exists
   $\varepsilon_0>0$ such that
   \begin{equation}
      \Q(v) \geq \varepsilon_0 \int_{0}^{T}\bigg(\lvert v \rvert^2 + \lvert \dot{v}
      \rvert^2\bigg)\dt,
      \label{eq:quadIneq2}
   \end{equation}
   for all $v \in V$.
   \label{lemma:WLM2}
\end{lemma}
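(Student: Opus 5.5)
The plan is the classical perturbation argument for the fixed end-point problem (as in \cite{giaquinta_2004}), adapted to $V$ by means of Theorem~\ref{th:DoslaSufCond}. For $\varepsilon\ge 0$ consider the perturbed functional
\[
\Q_\varepsilon(v) = \Q(v) - \varepsilon \int_0^T \big( \lvert v\rvert^2 + \lvert \dot v\rvert^2\big)\dt ,
\]
which is again of the form \eqref{eq:quadfunc}, with coefficients $P_\varepsilon = P-\varepsilon\Id$, $Q_\varepsilon = Q$, $R_\varepsilon = R-\varepsilon\Id$. It suffices to find $\varepsilon_0>0$ such that $\Q_{\varepsilon_0}(v)>0$ for every non-zero $v\in V$; then \eqref{eq:quadIneq2} follows at once.

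To find such an $\varepsilon_0$ I would use characterization (iii) of Theorem~\ref{th:DoslaSufCond}, because it depends continuously on the coefficients through solutions of a linear ODE.
\begin{itemize}
\item[(a)] Condition (L') is open. Since $R\ge k\Id$ on $[0,T]$ by compactness, $R_\varepsilon>0$ for $\varepsilon<k$. Hence $B_\varepsilon = R_\varepsilon^{-1}$, $A_\varepsilon$ and $C_\varepsilon$ are continuous in $\varepsilon$, uniformly in $t$.
\item[(b)] Since $\Q=\Q_0$ is positive definite, Theorem~\ref{th:DoslaSufCond} gives (J') and \eqref{eq:condIIIperiodic} for $\varepsilon=0$.
\item[(c)] The solutions $(Y_0^\varepsilon,Z_0^\varepsilon)$ and $(Y_T^\varepsilon,Z_T^\varepsilon)$ of \eqref{eq:jde_matrix} depend continuously on $\varepsilon$, uniformly on $[0,T]$. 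This is continuous dependence on parameters for linear systems.
\end{itemize}

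The main obstacle is showing that (J') persists under perturbation. The difficulty is near $t=0$, where $Y_0^\varepsilon(0)=0$ for every $\varepsilon$, so uniform closeness of $\det Y_0^\varepsilon$ to $\det Y_0$ does not exclude zeros accumulating at $0$. I would handle this in two steps.
\begin{itemize}
\item[(1)] Near $t=0$, write $Y_0^\varepsilon(t) = t\,\big(B_\varepsilon(0)+O(t)\big)$, using $\dot Y_0^\varepsilon(0)=B_\varepsilon(0)$. The remainder $O(t)$ is uniform in $\varepsilon$, because the coefficients are uniformly bounded. Hence $Y_0^\varepsilon(t)$ is invertible on $(0,\delta]$ for some $\delta>0$ independent of small $\varepsilon$.
\item[(2)] On $[\delta,T]$, $\lvert\det Y_0\rvert$ is bounded below by a positive constant. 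By uniform convergence, $\det Y_0^\varepsilon\neq 0$ there for small $\varepsilon$.
\end{itemize}
The same argument at $t=T$ gives invertibility of $Y_T^\varepsilon$ on $[0,T)$.

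Once (J') holds uniformly, $Y_0^\varepsilon(T)$ and $Y_T^\varepsilon(0)$ are invertible and close to their $\varepsilon=0$ values. Therefore the matrix
\[
W_0^\varepsilon(T)-W_T^\varepsilon(0)-Y_0^\varepsilon(T)^{-1}-Y_0^\varepsilon(T)^{-T}
\]
depends continuously on $\varepsilon$. It is positive definite at $\varepsilon=0$, so it stays positive definite for small $\varepsilon$. Theorem~\ref{th:DoslaSufCond}, applied to $\Q_{\varepsilon_0}$, then gives $\Q_{\varepsilon_0}>0$ on $V\setminus\{0\}$, which concludes the proof.

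An alternative route avoids the ODE perturbation altogether. Argue by contradiction with a normalized sequence $\norm{v_j}_{H^1}=1$ and $\Q(v_j)\to 0$. Extract a weak limit in the periodic $H^1$ space and use weak lower semicontinuity, as in the proof of (ii)$\Rightarrow$(i). (L') gives the coercivity estimate $\Q(v)\ge \frac{k}{2}\norm{\dot v}^2 - C\norm{v}^2$, which forces strong convergence and a non-zero limit with $\Q\le 0$. The only caveat on this route is that positivity must be extended from $C^1$ to $H^1$ by density, and this is exactly what Theorem~\ref{th:DoslaSufCond}(iii) provides through the splitting $v=v^*+h$.
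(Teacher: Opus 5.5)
Your main argument is correct and is essentially the paper's proof: you perturb the coefficients to $P-\varepsilon\Id$ and $R-\varepsilon\Id$, check (L'), (J') and \eqref{eq:condIIIperiodic} for small $\varepsilon$ by continuous dependence, and conclude with Theorem~\ref{th:DoslaSufCond}; your separate treatment of (J') near $t=0$, where $Y_0^\varepsilon(0)=0$ makes a plain uniform-convergence argument insufficient, supplies a step the paper passes over. The compactness route you sketch as an alternative is also sound, provided strict positivity is transferred to the periodic $H^1$ space through the splitting $v=v^*+h$, as you indicate.
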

\begin{proof}
   Consider the functional
   \[
      \begin{split}
      \hat{\mathcal{Q}}_\varepsilon(v) & := \Q(v) -  \varepsilon \int_{0}^{T}\bigg(\lvert v
      \rvert^2 + \lvert \dot{v} \rvert^2\bigg)\dt \\
      & = \int_{0}^{T}\bigg( v\cdot(P-\varepsilon \Id) v + 2 \dot{v}\cdot Q v +
         \dot{v}\cdot
      (R-\varepsilon \Id) \dot{v} \bigg) \dt \\
      & = \int_{0}^{T}\bigg( v\cdot \hat{P} v + 2 \dot{v}\cdot \hat{Q} v +
      \dot{v}\cdot \hat{R} \dot{v} \bigg) \dt,
   \end{split}
   \]
   where $\hat{P} = P-\varepsilon\Id, \, \hat{Q}=Q, \, \hat{R}=R-\varepsilon\Id$.
   We verify that $\hat{\mathcal{Q}}_{\varepsilon}$ is still positive definite for $\varepsilon$
   small enough, using the equivalent conditions given by Theorem
   \ref{th:DoslaSufCond}. For $\varepsilon$ small enough, 
   $\hat{\mathcal{Q}}_{\varepsilon}$ verifies (L'), (J') and Eq.~\eqref{eq:condIIIperiodic}:
   \begin{itemize}
      \item[(L')] holds since, from condition (L') on $R$, we have that $R \geq k\Id$ for
         some $k >0$. Hence $\hat{R} \geq (k-\varepsilon)\Id$. For $\varepsilon$ small enough we have
         that $k-\varepsilon>0$. 
      \item[(J')] holds because of the uniform convergence of the solutions of Eq.~\eqref{eq:jde_matrix} as $\varepsilon \to
         0$. 
      \item[\eqref{eq:condIIIperiodic}] holds for the same reason above, since the eigenvalues are continuous 
         functions with respect to the entries of the matrix.
   \end{itemize}
   Therefore, for $\varepsilon$ small enough, the quadratic functional
   $\hat{\mathcal{Q}}_\varepsilon$ is positive definite and the thesis follows.
\end{proof}
\begin{lemma}
   Let $\Q_0$ be a positive definite quadratic functional defined on $V$, with coefficients $P_0, Q_0, R_0$, satisfying
   $\rm{(L')}$. Let $\Q$ be a quadratic functional defined on $V$ with coefficients $P,Q,R$. If
   \[
      \norm{P-P_0}_{\infty}, \quad \norm{Q-Q_0}_{\infty}, \quad \norm{R-R_0}_{\infty},
   \]
   are small enough, then $\Q$ is positive definite.
   \label{lemma:WLM3}
\end{lemma}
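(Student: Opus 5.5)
The plan is to combine Lemma~\ref{lemma:WLM2} with Lemma~\ref{lemma:WLM1}, applied to the difference functional. Since $\Q_0$ is positive definite and satisfies (L'), Lemma~\ref{lemma:WLM2} gives some $\varepsilon_0>0$ with
\[
\Q_0(v) \geq \varepsilon_0 \int_0^T \big(\lvert v\rvert^2 + \lvert \dot v\rvert^2\big)\dt \qquad \text{for all } v\in V.
\]

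Next, write $\Q(v) = \Q_0(v) + (\Q-\Q_0)(v)$. The difference $\Q-\Q_0$ is itself a quadratic functional of the form \eqref{eq:quadfunc}, with coefficients $P-P_0$, $Q-Q_0$, $R-R_0$; note that $P-P_0$ and $R-R_0$ are still symmetric. Applying Lemma~\ref{lemma:WLM1} to it, and tracking the explicit constant from its proof as in Remark~\ref{rmk:smallCoeff}, we get
\[
\lvert(\Q-\Q_0)(v)\rvert \leq \delta \int_0^T\big(\lvert v\rvert^2+\lvert\dot v\rvert^2\big)\dt,
\]
where
\[
\delta=\max\{\norm{Q-Q_0}_\infty+\norm{R-R_0}_\infty,\ \norm{Q-Q_0}_\infty+\norm{P-P_0}_\infty\}.
\]
The smallness hypothesis means precisely that we may take $\delta<\varepsilon_0$. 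Then
\[
\Q(v)\geq(\varepsilon_0-\delta)\int_0^T\big(\lvert v\rvert^2+\lvert\dot v\rvert^2\big)\dt,
\]
and the right-hand side is strictly positive for every nonzero $v\in V$, because a nonzero continuous $v$ has $\int\lvert v\rvert^2>0$. This is positive definiteness of $\Q$.

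The main point needing care is the constant in Lemma~\ref{lemma:WLM1}. Its proof passes through $\lvert 2\dot v\cdot Qv\rvert\le \norm{Q}_\infty(\lvert v\rvert^2+\lvert\dot v\rvert^2)$, so the constant depends linearly on the sup norms of the coefficients and tends to zero with them. We must verify that this constant is not an unspecified $C$ that could stay bounded away from zero; if needed, we redo that one-line estimate directly for the difference coefficients. Beyond this, no assumption on (L') for $\Q$ itself is needed, although it follows anyway from $\norm{R-R_0}_\infty$ being small.
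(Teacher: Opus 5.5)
Your proposal is correct and is essentially the paper's own proof: you write $\Q=\Q_0+(\Q-\Q_0)$, bound $\Q_0$ from below by Lemma~\ref{lemma:WLM2}, and bound the difference functional by Lemma~\ref{lemma:WLM1}, whose constant tends to zero with the sup norms of $P-P_0$, $Q-Q_0$, $R-R_0$, as in Remark~\ref{rmk:smallCoeff}. Your explicit formula for $\delta$ simply makes that remark quantitative.
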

\begin{proof}
   Let us write
   \[
      \Q(v) = \Q_0(v) + \hat{\Q}(v),
   \]
   where
   \[
      \hat{\Q}(v) = \int_{0}^{T}\bigg( v\cdot(P-P_0)v +
      2\dot{v}\cdot(Q-Q_0)v + \dot{v} \cdot (R-R_0) \dot{v} \bigg) \dt.
   \]
   From Lemma \ref{lemma:WLM2} we have that
   \[
      \Q_0(v) \geq \varepsilon_0 \int_{0}^{T}\bigg(\lvert v \rvert^2 + \lvert \dot{v}
      \rvert^2\bigg)\dt.
   \]
   From Lemma \ref{lemma:WLM1}, there exists $C$ such that
   \[
      \lvert \hat{\Q}(v) \rvert \leq C \int_{0}^{T}\bigg(\lvert v \rvert^2  +
      \lvert \dot{v} \rvert^2 \bigg) \dt.
   \]
   Then 
   \[
      \Q(v) \geq \varepsilon_0 \int_{0}^{T}\bigg(\lvert v \rvert^2 + \lvert \dot{v}
      \rvert^2\bigg)\dt - C \int_{0}^{T}\bigg( \lvert v \rvert^2 + \lvert \dot{v} \rvert^2\bigg) \dt,
   \]
   and the coefficient $C$, from Remark \ref{rmk:smallCoeff}, is smaller than $\varepsilon_0$ for 
   $\norm{P-P_0}_{\infty}$, $\norm{Q-Q_0}_{\infty}$, $\norm{R-R_0}_{\infty}$ small enough,
   hence $\Q$ is positive definite.
\end{proof}
\begin{theorem}
   Let $u_0 \in X$ be a $C^2$ periodic solution of the Euler-Lagrange equation \eqref{eq:EL}, and with a positive second variation $\delta^2\A[u_0]$. Then $u_0$ is a (WLM).
   \label{th:WLMsuf}
\end{theorem}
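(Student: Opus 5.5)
We follow the classical Taylor-expansion argument for weak local minimizers, using Lemmas~\ref{lemma:WLM1}--\ref{lemma:WLM3}. We implicitly assume (L') for the second variation; this is the setting in which ``positive second variation'' is used in Lemma~\ref{lemma:WLM2}, and it is natural since Lemma~\ref{lemma:necessary1} gives (L) from positivity. Fix $u \in X$ with $\norm{u-u_0}_\infty + \norm{\dot u-\dot u_0}_\infty<\varepsilon$ and set $v = u-u_0 \in V$.

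\textbf{Step 1: exact second-order expansion.} By Taylor's formula with integral remainder applied to $s\mapsto \A(u_0+sv)$,
\[
\A(u) = \A(u_0) + \delta\A(u_0)[v] + \int_0^1 (1-s)\,\frac{\d^2}{\d s^2}\A(u_0+sv)\,\d s .
\]
The first variation vanishes. Integrating by parts,
\[
\delta\A(u_0)[v]=\int_0^T\big(L_u\cdot v+L_{\dot u}\cdot\dot v\big)\dt=\big[L_{\dot u}\cdot v\big]_0^T=0 .
\]
The integral term disappears by the Euler--Lagrange equation~\eqref{eq:EL}. The boundary term vanishes because of~\eqref{eq:Ludot} together with $v(0)=v(T)$.

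\textbf{Step 2: the remainder is a perturbed quadratic functional.} The second-derivative term equals $\int_0^1(1-s)\Q_s(v)\,\d s$. Here $\Q_s$ is the quadratic functional with coefficients
\[
P_s=L_{uu},\quad Q_s=L_{u\dot u},\quad R_s=L_{\dot u\dot u},
\]
all evaluated at $(t,u_0+sv,\dot u_0+s\dot v)$. Swapping the order gives
\[
\A(u)-\A(u_0)=\tfrac12\tilde\Q(v),
\]
where $\tilde\Q$ has coefficients $\tilde P=2\int_0^1(1-s)P_s\,\d s$, and similarly $\tilde Q,\tilde R$. The coefficients of $\Q_0=\delta^2\A$ (up to the factor convention) are $\hat L_{uu},\hat L_{u\dot u},\hat L_{\dot u\dot u}$. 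Since $L\in C^3$, the second derivatives are uniformly continuous on a compact neighbourhood of the graph $\{(t,u_0(t),\dot u_0(t))\}\subset[0,T]\times\Omega$. This neighbourhood is chosen inside $\Omega$ because the graph is compact and $\Omega$ is open, and this also fixes a first upper bound on $\varepsilon$. Therefore $\norm{\tilde P-\hat L_{uu}}_\infty$, $\norm{\tilde Q-\hat L_{u\dot u}}_\infty$ and $\norm{\tilde R-\hat L_{\dot u\dot u}}_\infty$ tend to $0$ as $\varepsilon\to0$, uniformly in $v$.

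\textbf{Step 3: conclusion and the main obstacle.} Lemma~\ref{lemma:WLM3} applies here, but the smallness it needs must be made uniform in $v$, since $\tilde\Q$ depends on $v$. The clean route is through Lemmas~\ref{lemma:WLM2} and~\ref{lemma:WLM1}. Lemma~\ref{lemma:WLM2} gives
\[
\delta^2\A(v)\ge\varepsilon_0\int_0^T\big(|v|^2+|\dot v|^2\big)\dt .
\]
Lemma~\ref{lemma:WLM1} with Remark~\ref{rmk:smallCoeff} bounds the difference by
\[
|\tilde\Q(v)-\delta^2\A(v)|\le C(\varepsilon)\int_0^T\big(|v|^2+|\dot v|^2\big)\dt ,
\]
with $C(\varepsilon)\to0$ independently of $v$. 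Choosing $\varepsilon$ so that $C(\varepsilon)<\varepsilon_0$ gives $\A(u)-\A(u_0)\ge0$, with strict inequality for $v\ne0$, so $u_0$ is a (WLM).

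The delicate point is this uniformity. Remark~\ref{rmk:smallCoeff} only controls $C$ through sup norms, so we must make sure that the constant depends only on the modulus of continuity of $D^2L$ on the fixed compact neighbourhood, and not on $v$. Otherwise the argument is routine.
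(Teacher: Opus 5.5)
Your proof is correct and follows the paper's argument. You expand $\varphi(s)=\A(u_0+sv)$ to second order, kill the first variation with the Euler--Lagrange equation and \eqref{eq:Ludot}, and note that the remainder is a quadratic functional whose coefficients are uniformly close to those of $\delta^2\A$; Lemma~\ref{lemma:WLM3}, which you unpack into Lemmas~\ref{lemma:WLM2} and~\ref{lemma:WLM1}, then gives nonnegativity. Your differences are cosmetic refinements: the integral remainder instead of the paper's $\tfrac12\varphi''(s_0)$, the explicit uniformity in $v$, and stating (L') openly, which the paper also needs implicitly through Lemma~\ref{lemma:WLM3}.
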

\begin{proof}
   Let $u \in X$ and set $v = u-u_0 \in V$. Consider 
   \[
      \varphi(s) := \A(u_0 + sv).
   \]
   Then
   \[
      \A(u) = \varphi(1) =  \varphi(0) + \varphi'(0) + \frac{1}{2}\varphi''(s_0) = \A(u_0)
      +\frac{1}{2}\varphi''(s_0), \quad s_0 \in (0,1).
   \]
   We have that
   \[
      \begin{split}
         \varphi''(s_0)  = \int_{0}^{T}&\bigg( v(t)\cdot L_{uu}\big(t, u_0(t)+s_0 v(t),
         \dot{u}_0(t)
         +s_0\dot{v}(t)\big) v(t)\\
         &+
         2\dot{v}(t)\cdot L_{\dot{u}u}\big(t, u_0(t)+s_0 v(t), \dot{u}_0(t)
         +s_0\dot{v}(t)\big) v(t) \\& +
         \dot{v}(t)\cdot L_{\dot{u}\dot{u}}\big(t, u_0(t)+s_0 v(t), \dot{u}_0(t)
         +s_0\dot{v}(t)\big)
         \dot{v}(t) \bigg) \dt. \\
      \end{split}
   \]
   Note that $\varphi''(s_0)$ is a quadratic functional. If $\norm{v}_\infty+\norm{\dot{v}}_\infty$ is small enough,
   by the continuity of the second derivatives of $L$, the coefficients of $\varphi''(s_0)$ are near to the
   coefficients of the second variation associated to $u_0$, in the sup norm. Therefore, from 
   Lemma \ref{lemma:WLM3} we conclude that $\varphi''(s_0)\geq 0$ and $u_0$ is a (WLM).
\end{proof}

\section{Strong local minimizers}
\label{s:slm}
To discuss the strong local minimizers, we need additional definitions and conditions. We
define the Weierstrass excess function as
\begin{equation}
   E(t,u,v,w) := L(t,u,w) - L(t,u,v) - (w-v) \cdot L_{\dot{u}}(t,u,v).
   \label{eq:wef}
\end{equation}
Let $u_0 \in X$ be a solution of the Euler-Lagrange equation \eqref{eq:EL}. To simplify
the notations in the following, we define the tube around $u_0$ of radius
$\varepsilon>0$ as
\[
   T(u_0, \varepsilon) = \big\{ (t, y) \in [0,T] \times \R^n : |y-u_0(t)| < \varepsilon
   \big\}, 
\]
and the restricted tube as
\[
   RT(u_0, \varepsilon) = \big\{ (t, y, v) \in [0,T] \times \R^n \times \R^n : |y-u_0(t)| <
      \varepsilon, \, |v-\dot{u}_0(t)|<\varepsilon \big\}.
\]
We introduce also additional conditions.
\begin{itemize}
   \item[(W)] The Weierstrass condition holds if
      \begin{equation}
         E(t,u_0(t),\dot{u}_0(t), w) \geq 0,
         \label{eq:wcondition}
      \end{equation}
      for all $t \in [0,T]$ and for all $w \in \R^n$. 
   \item[(W')] The strengthened Weierstrass condition holds if 
      \begin{equation}
         E(t,y,v,w) \geq 0,
         \label{eq:swcondition}
      \end{equation}
      for $(t,y,v) \in RT(u_0,\varepsilon)$ and for all $w \in \R^n$.
\end{itemize}
\begin{remark}
   Note that condition (W') is satisfied whenever $L$ is globally convex in
   $\dot{u}$, that is when $L_{\dot{u}\dot{u}}\geq 0$. 
   In classical mechanics, the latter condition is fulfilled because the term of the
   kinetic energy which is quadratic in $\dot{u}$ is positive definite.
   \label{rmk:sw}
\end{remark} 
In the classical setting of the fixed endpoints problem, it is known that a necessary
condition for a solution of the Euler-Lagrange equation to be a (SLM) is
that condition (W) holds. This is true also in the case of periodic
boundary conditions: we can use the same proof, taking into account only the competitors that are
zero at the extrema of the time interval.
Moreover, in the fixed-end point problem it is known that a solution of the Euler-Lagrange equation satisfying conditions (L'), (J') and (W') is (SLM). We refer again to classical books of Calculus of Variations for proofs
of this fact \cite{giaquinta_2004, pars_1962}. A similar theorem is valid for periodic boundary conditions.

\begin{theorem}
    Let $u_0 \in X$ be a $C^2$ periodic solution of the Euler-Lagrange equation \eqref{eq:EL}. Suppose that conditions (L'), (W'), and the equivalent conditions of Theorem~\ref{th:DoslaSufCond} are satisfied for the second variation associated to $u_0$. Then $u_0$ is a (SLM).
    \label{th:slm_conj}
\end{theorem}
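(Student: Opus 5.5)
The plan is to follow the classical Weierstrass field-of-extremals argument, using the (SR)-type Riccati identity to handle the periodic boundary term. Since (SR) is not necessary, the periodic boundary contribution has to be controlled differently.

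\textbf{Step 1: build a field of extremals in the fixed-endpoint sense.} By Theorem~\ref{th:DoslaSufCond}, (J') holds, so the fixed-endpoint second variation is positive. The standard fixed-endpoint construction (shifting the initial time slightly before $0$, or embedding $u_0$ in a family of extremals through a point $(-\delta,u_0(-\delta))$ using periodicity of $L$) then gives a field of extremals covering a tube $T(u_0,\varepsilon)$. Its slope function $p(t,y)$ satisfies $p(t,u_0(t))=\dot u_0(t)$. Let $S(t,y)$ be the associated Hilbert invariant integral (eikonal), so that
\[
S_t = L - p\cdot L_{\dot u}, \qquad S_y = L_{\dot u}(t,y,p).
\]

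\textbf{Step 2: apply the Weierstrass formula to a periodic competitor $u$ in the tube.} For such $u$,
\[
\A(u)-\A(u_0) = \int_0^T E(t,u,p(t,u),\dot u)\,\dt + \big[S(T,u(T))-S(0,u(0))\big] - \big[S(T,u_0(T))-S(0,u_0(0))\big].
\]
The integral is non-negative by (W'), after shrinking $\varepsilon$ so that $(t,u,p(t,u))\in RT(u_0,\varepsilon)$. The remaining difference is a function $g(x)=S(T,x)-S(0,x)$ of $x=u(0)=u(T)$, compared with $g(u_0(0))$.

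\textbf{Step 3 (main obstacle): show that $g$ has a strict local minimum at $x_0=u_0(0)$.} Since $g'(x_0)=L_{\dot u}(T)-L_{\dot u}(0)=0$ by \eqref{eq:Ludot}, it suffices to have $g''(x_0)>0$. The Hessian $S_{yy}$ along $u_0$ is a symmetric Riccati solution $W$ on $[0,T]$, so $g''(x_0)=W(T)-W(0)$, which is exactly the (SR) quantity. Because (SR) need not hold, I would not insist on a single field. Instead I would use the freedom of choosing the field; by Proposition~\ref{prop:RiccatiCharacterization} this means choosing $S$, together with a splitting of the competitor.

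The splitting goes as follows. For a competitor $u$ with $u(0)=x$, let $\bar u_x$ be the extremal with $\bar u_x(0)=\bar u_x(T)=x$. It exists near $u_0$ by the implicit function theorem, which uses (J'), i.e. $Y_0(T)$ invertible. Then:

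\begin{itemize}
\item First compare $u$ with $\bar u_x$ using a fixed-endpoint field around $\bar u_x$. These fields depend continuously on $x$, and the fixed-endpoint theory with (L'), (J'), (W') gives $\A(u)\ge \A(\bar u_x)$.
\item Then compare $\bar u_x$ with $u_0$. The map $x\mapsto \A(\bar u_x)$ is $C^2$ with Hessian at $x_0$ equal to $\Q(v^*)$ for the $v^*$ built in the proof of Theorem~\ref{th:DoslaSufCond}, namely the matrix in \eqref{eq:condIIIperiodic}, which is positive definite. So $x_0$ is a strict local minimum of $\A(\bar u_x)$.
\end{itemize}

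Combining the two comparisons gives $\A(u)\ge\A(u_0)$ for $\norm{u-u_0}_\infty<\varepsilon$. The delicate points are the uniformity of the fixed-endpoint field in the parameter $x$, which I expect to follow from continuous dependence on initial data, and checking that the fixed-endpoint sufficiency argument applies uniformly in $x$.
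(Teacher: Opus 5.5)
Your final argument is the same as the paper's proof. You compare $u$ with the fixed-endpoint extremal $\bar u_x$ through $(0,x)$ and $(T,x)$, then use that the reduced action $x\mapsto \A(\bar u_x)$ has zero gradient by \eqref{eq:Ludot} and Hessian equal to the matrix in \eqref{eq:condIIIperiodic}; the paper does exactly this with $f(y)=\A(u_{(y,y)})$. The single-field/(SR) discussion in your Steps 1--3 is a detour you rightly abandon, and the uniformity of the fixed-endpoint neighborhood in $x$ that you flag is also only handled briefly in the paper, through its choice of $\varepsilon$ and $\varepsilon_1$.
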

\begin{proof}
    Because (J') holds, then $\det Y_0(t)\neq 0$ in $(0,T]$. Therefore $u_0$ is a non-degenerate solution of the fixed-end point problem. Therefore, by the classical theory of Calculus of Variations, for $(y,z)$ in a neighborhood of $(u_0(0), u_0(T))$ there exists a unique solution of the Euler-Lagrange equation \eqref{eq:EL} $u_{(y,z)}$ which is close to $u_0$ in the $C^1$ norm. In addition, $u_{(y,z)}$ is $C^1$ in $(y,z)$. 
    This means that $u_{(y,z)}(t)$ is contained in a restricted tube $RT(u_0, \varepsilon)$ for some $\varepsilon>0$. By continuity therefore $u_{(y,z)}$ still satisfies (L'), (J'), and (W'), hence it is a (SLM) for the fixed-end point problem with boundary conditions $u(0)=u_{(y,z)}(0), u(T)=u_{(y,z)}(T)$.

    In a neighborhood of $y_0 = u_0(0) \in \R^n$ we define the reduced action
    \[
    f(y) = \A(u_{(y,y)}).
    \]
    Let us set $p(t)=L_{\dot{u}}(t,u,\dot{u})$, then $\nabla f(y) = p_y(T)-p_y(0)$, which is $0$ in $y_0$ because Eq.~\eqref{eq:Ludot} holds. In addition, because $u_{(y,y)}$ are all extrema, the Hessian is
    \[
    f''(y_0)[x,x] = \delta^2\A(v^*), \quad v^*(t) = (Y_0(t) Y_0(T)^{-1} + Y_T(t) Y_0(T)^{-T})x,
    \]
    which is exactly the Jacobi field with $v^*(0)=v^*(T)=x$. Therefore, with the same steps of \emph{(i)}$\Rightarrow$\emph{(iii)} of Theorem~\ref{th:DoslaSufCond}, we obtain
    \[
    f''(y_0) = W_0(T) - W_T(0) - Y_0(T)^{-1} - Y_0(T)^{-T},
    \]
    which is positive definite by hypothesis. Hence $y_0$ is a strict minimum point of $f$.

    Let us now take $u \in X$ such that $\norm{u-u_0}_\infty < \varepsilon$, with $\varepsilon$ such that $u(0)=u(T)=y$ is in the neighborhood of $u_0$, and such that $\norm{u - u_{(y,y)}}_\infty < \varepsilon_1$. Then $u$ is a competitor of $u_{(y,y)}$ for the fixed-end point problem, and therefore
    \[
    \A(u)\geq f(y)\geq f(y_0) = \A(u_0),
    \]
    hence $u_0$ is a (SLM).
\end{proof}

\begin{remark}
    In \cite{fenucci_2022} it was shown that conditions (L'), (W'), and (SR) are sufficient for a solution of the Euler-Lagrange equation to be a (SLM). However, Theorem~\ref{th:slm_conj} is a weaker statement, since (SR) is not necessary to satisfy its hypotheses.  
\end{remark}

Theorem~\ref{th:slm_conj} completes the characterization of local minimizers for periodic boundary conditions. These are all schematically summarized in Figure~\ref{fig:Minima_Scheme}.
\begin{figure}
    \centering
    \includegraphics[width=0.9\textwidth]{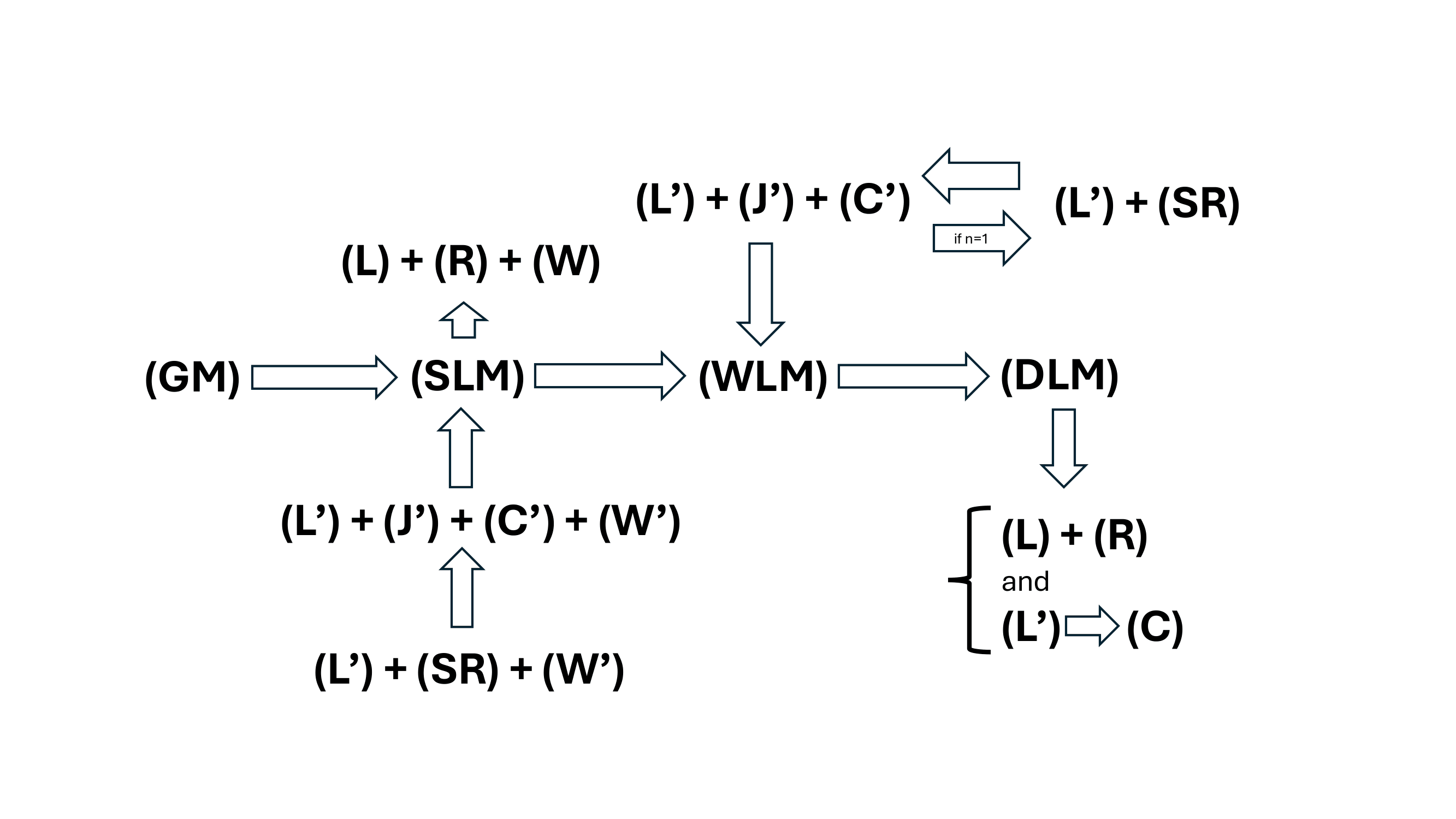}
    \caption{Summary scheme of the necessary and sufficient conditions for the local minimality under periodic boundary conditions.}
    \label{fig:Minima_Scheme}
\end{figure}

\section*{Declarations}
\noindent\textbf{Funding.} The author declares that no funds, grants, or other support were received during the preparation of this manuscript.

\noindent\textbf{Competing interests.} The author has no relevant financial or non-financial interests to disclose.

\noindent\textbf{Data availability.} Data sharing is not applicable to this article, as no datasets were generated or analysed during the current study.

\bibliographystyle{amsplain}
\bibliography{sample}

@book{Reid1980,
  title={Sturmian Theory for Ordinary Differential Equations},
  author={Reid, W.T.},
  year={1993},
  publisher={Springer-Verlag}
}

@article{barrabes-etal_2006,
   author={Barrab{\'e}s, E. and Cors, J. M. and Pinyol, C. and Soler, J.},
   title={Hip-hop solutions of the {$2N$}-body problem},
   journal={Celestial Mechanics and Dynamical Astronomy},
   year={2006},
   month={May},
   day={01},
   volume={95},
   number={1},
   pages={5--66},
}

@article{chen_2008,
 author = {Chen, K. C.},
 journal = {Annals of Mathematics},
 number = {2},
 pages = {325--348},
 title = {Existence and Minimizing Properties of Retrograde Orbits to
                  the Three-Body Problem with Various Choices of
                  Masses},
 volume = {167},
 year = {2008},
}

@article{chen_2012,
title = {Action-minimizing periodic and quasi-periodic solutions in
                  the {$N$}-body problem},
author = {Chen, K. C. and Tiancheng O. and Zhihong X.},
year = {2012},
volume = {19},
pages = {483--497},
journal = {Mathematical Research Letters},
publisher = {International Press of Boston, Inc.},
}

@article{hilscher-zeidan_2008,
  title={Applications of time scale symplectic systems without normality},
  author={Roman Simon Hilscher and Vera Zeidan},
  journal={Journal of Mathematical Analysis and Applications},
  year={2008},
  volume={340},
  pages={451-465},
  url={https://api.semanticscholar.org/CorpusID:120501287}
}

@article{chenciner-montgomery_2000,
 author = {Chenciner, A. and Montgomery, R.},
 journal = {Annals of Mathematics},
 number = {3},
 pages = {881--901},
 title = {A Remarkable Periodic Solution of the Three-Body Problem in
                  the Case of Equal Masses},
 volume = {152},
 year = {2000},
}

@article{fenucci_2022,
	author = {Fenucci, M. },
	date = {2022/01/18},
	doi = {10.1007/s42985-022-00148-5},
	isbn = {2662-2971},
	journal = {Partial Differential Equations and Applications},
	number = {1},
	pages = {10},
	title = {Local minimality properties of circular motions in {\$}{\$}1/r\^{}{$\backslash$}alpha {\$}{\$}potentials and of the figure-eight solution of the 3-body problem},
	url = {https://doi.org/10.1007/s42985-022-00148-5},
	volume = {3},
	year = {2022},
}

@book{dacorogna_2004,
  title={Introduction to the Calculus of Variations},
  author={Dacorogna, B.},
  year={2004},
  publisher={Imperial College Press}
}

@article{dosla-dosly_1997,
    AUTHOR = {Do\v{s}l\'a, Z. and Do\v{s}l\'y, O.},
     TITLE = {Quadratic functionals with general boundary conditions},
   JOURNAL = {Appl. Math. Optim.},
    VOLUME = {36},
      YEAR = {1997},
    NUMBER = {3},
     PAGES = {243--262},
}

@article{chenciner-venturelli_2000,
 author = {Chenciner, A. and Venturelli, A.},
 title = {Minima de l'int\'egrale d'action du probl\`eme newtonien de
                  4 corps de masses \'egales dans {${\bf R}^3$}:
                  orbites ``hip-hop''},
 journal = {Cel. Mech. Dyn. Ast.},
 volume = {77},
 number = {2},
 pages = {139--152},
 year = {2000},
}

@article{ferrario-terracini_2004,
author= {Ferrario, D. L. and Terracini, S.},
title={On the existence of collisionless equivariant minimizers for
                  the classical {$n$}-body problem},
journal={Inventiones mathematicae},
year={2004},
volume={155},
number={2},
pages={305--362},
}

@article{fenucci-gronchi_2018,
  author={Fenucci, M. and Gronchi, G. F.},
  title={On the stability of periodic {$N$}-body motions with the symmetry of {P}latonic polyhedra},
  journal={Nonlinearity},
  volume={31},
  number={11},
  pages={4935},
  year={2018},
}

@ARTICLE{fenucci-gronchi_2023,
       author = {{Fenucci}, Marco and {Gronchi}, Giovanni Federico},
        title = "{Symmetric Constellations of Satellites Moving Around a Central Body of Large Mass}",
      journal = {Journal of Dynamics and Differential Equations},
         year = 2023,
        month = jun,
       volume = {35},
       number = {2},
        pages = {1511-1559},
          doi = {10.1007/s10884-021-10083-5},
archivePrefix = {arXiv},
       eprint = {2003.04580},
 primaryClass = {math-ph},
       adsurl = {https://ui.adsabs.harvard.edu/abs/2023JDDE...35.1511F}
}

@article{fusco-gronchi-negrini_2011,
   author={Fusco, G. and Gronchi, G. F. and Negrini, P.},
   title={Platonic polyhedra, topological constraints and periodic
                  solutions of the classical {$N$}-body problem},
   journal={Inventiones mathematicae},
   year={2011},
   volume={185},
   number={2},
   pages={283--332},
}

@book{giaquinta_2004,
  title={Calculus of Variations {I}},
  author={Giaquinta, M. and Hildebrandt, S.},
  series={Grundlehren der mathematischen Wissenschaften},
  year={2004},
  publisher={Springer Berlin Heidelberg}
}

@book{galileo_1638,
  title={Discorsi e dimostrazioni matematiche intorno a due nuove scienze},
  author={Galilei, G.},
  year={1638},
  publisher={Lodewijk Elzevir}
}

@article {moore_1993,
    AUTHOR = {Moore, C.},
     TITLE = {Braids in classical dynamics},
   JOURNAL = {Phys. Rev. Lett.},
    VOLUME = {70},
      YEAR = {1993},
    NUMBER = {24},
     PAGES = {3675--3679},
}

@book{pars_1962,
  title={An introduction to the calculus of variations},
  author={Pars, L.A.},
  year={1962},
  publisher={Wiley}
}

@inbook{simo_2000,
   author={Sim\'o, C.},
   title={Periodic orbits of the planar {$N$}-body problem with equal
                  masses and all bodies on the same path},
   bookTitle={The restless universe. Applications of gravitational
                  $N$-body dynamics to planetary, stellar and galactic
                  systems},
   year={2001},
   publisher={IoP Publishing},
   pages={265--284},
}

@inproceedings{simo_2001,
   author={Sim{\'o}, C.},
   title={New Families of Solutions in {$N$}-Body Problems},
   editor={Casacuberta, Carles and Mir{\'o}-Roig, Rosa Maria and
                  Verdera, Joan and Xamb{\'o}-Descamps, Sebasti{\`a}},
   bookTitle={European Congress of Mathematics: Barcelona, July
                  10--14, 2000, Volume I},
   year={2001},
   publisher={Birkh{\"a}user Basel},
   address={Basel},
   pages={101--115},
}

@incollection{simo_2002,
    AUTHOR = {Sim\'o, C.},
     TITLE = {Dynamical properties of the figure eight solution of the
                  three-body problem},
 BOOKTITLE = {Celestial mechanics ({E}vanston, {IL}, 1999)},
    SERIES = {Contemp. Math.},
    VOLUME = {292},
     PAGES = {209--228},
 PUBLISHER = {Amer. Math. Soc., Providence, RI},
      YEAR = {2002},
}

@article{zeidan-zezza_1989,
    AUTHOR = {Zeidan, V. and Zezza, P.},
     TITLE = {Coupled points in the calculus of variations and applications
              to periodic problems},
   JOURNAL = {Trans. Amer. Math. Soc.},
    VOLUME = {315},
      YEAR = {1989},
    NUMBER = {1},
     PAGES = {323--335},
}

@article{zeidan_1993,
   author  = {Zeidan, V.},
   title   = {Sufficient conditions for variational problems with variable endpoints:
      Coupled points},
   journal = {Applied Mathematics and Optimization},
   year    = {1993},
   volume  = {27},
   number  = {2},
   pages   = {191--209},
   doi     = {10.1007/BF01195982},
}

@article{zeidan_2001,
    AUTHOR = {Zeidan, V.},
     TITLE = {New second-order optimality conditions for variational
              problems with {$C^2$}-{H}amiltonians},
   JOURNAL = {SIAM Journal on Control and Optimization},
    VOLUME = {40},
      YEAR = {2001},
    NUMBER = {2},
     PAGES = {577--609},
      ISSN = {0363-0129},
       DOI = {10.1137/S0363012999358725},
       URL = {https://doi.org/10.1137/S0363012999358725},
}

@ARTICLE{ciccarelli-etal_2026,
       author = {{Ciccarelli}, Roberto and {Introna}, Margaux and {Terracini}, Susanna and {Vasile}, Massimiliano},
        title = "{New Solutions for the Symmetrical n-Body Problem Through Variational Approach and Optimisation Techniques}",
      journal = {Aerotecnica Missili \& Spazio},
         year = 2026,
        month = apr,
       volume = {105},
       number = {2},
        pages = {153-170},
          doi = {10.1007/s42496-025-00265-5},
archivePrefix = {arXiv},
       eprint = {2505.04221},
 primaryClass = {math.DS},
       adsurl = {https://ui.adsabs.harvard.edu/abs/2026AeMiS.105..153C}
}

@ARTICLE{berti-etal_2026,
       author = {{Berti}, Diego and {Canneori}, Gian Marco and {Ciccarelli}, Roberto and {De Blasi}, Irene and {Introna}, Margaux and {Polimeni}, Davide},
        title = "{Symmetric solutions of the n-body problem: A numerical study of Floquet multipliers and Morse indices}",
      journal = {Astronomy and Computing},
         year = 2026,
        month = apr,
       volume = {55},
          eid = {101085},
        pages = {101085},
          doi = {10.1016/j.ascom.2026.101085},
archivePrefix = {arXiv},
       eprint = {2511.17234},
 primaryClass = {math.DS},
       adsurl = {https://ui.adsabs.harvard.edu/abs/2026A&C....5501085B}
}

@ARTICLE{fenucci-jorba_2020,
       author = {{Fenucci}, Marco and {Jorba}, {\`A}ngel},
        title = "{Braids with the symmetries of Platonic polyhedra in the Coulomb (N+1)-body problem}",
      journal = {Communications in Nonlinear Science and Numerical Simulations},
         year = 2020,
        month = apr,
       volume = {83},
          eid = {105105},
        pages = {105105},
          doi = {10.1016/j.cnsns.2019.105105},
archivePrefix = {arXiv},
       eprint = {2009.07901},
 primaryClass = {math-ph},
       adsurl = {https://ui.adsabs.harvard.edu/abs/2020CNSNS..8305105F}
}

\end{document}